\documentclass{amsart}
\usepackage{amssymb,amsmath,amsthm,amscd,mathtools,pxfonts,pdflscape,multirow,float,rotating}

\newtheorem{theorem}{Theorem}
\newtheorem{lemma}[theorem]{Lemma}
\newtheorem{corollary}[theorem]{Corollary}
\newtheorem{proposition}[theorem]{Proposition}

\newtheorem{remark}[theorem]{Remark}
\numberwithin{theorem}{section}
\numberwithin{equation}{section}

\newcommand*{\MyScale}{0.75}

\restylefloat{table}
\setcounter{tocdepth}{1}
\newcommand{\nocontentsline}[3]{}
\newcommand{\tocless}[2]{\bgroup\let\addcontentsline=\nocontentsline#1*{#2}\egroup}

\newcommand\mycom[2]{\genfrac{}{}{0pt}{}{#1}{#2}}
\newcommand{\app}[4]{F_{#1}\! \left(\left. \mycom{#2}{#3} \right| #4 \right) }
\newcommand{\appd}[4]{\mathsf{F}_{#1}\! \left(\left. \mycom{#2}{#3} \right| #4 \right) }
\newcommand{\appo}[1]{F_{#1}}
\newcommand{\appdo}[1]{\mathsf{F}_{#1}}
\newcommand{\hpg}[5]{{}_{#1}F_{#2}\! \left(\left. \mycom{#3}{#4} \right| #5 \right) }
\newcommand{\hpgd}[5]{{}_{#1}\mathsf{F}_{#2}\! \left(\left. \mycom{#3}{#4} \right| #5 \right) }
\newcommand{\hpgo}[2]{{}_{#1}F_{#2}}
\newcommand{\hpgdo}[2]{{}_{#1}\mathsf{F}_{#2}}

\newcommand{\re}{\textnormal{Re}}

\begin{document}

\title[Kummer surfaces and special function identities]{Jacobian elliptic Kummer surfaces \\[0.2em] and special function identities}

\author{Elise Griffin}
\address{Department of Mathematics and Statistics, Utah State University,
Logan, UT 84322}
\email{elise.griffin@aggiemail.usu.edu}

\author{Andreas Malmendier}
\address{Department of Mathematics and Statistics, Utah State University,
Logan, UT 84322}
\email{andreas.malmendier@usu.edu}

\begin{abstract}
We derive formulas for the construction of all inequivalent Jacobian elliptic fibrations on the Kummer surface of two non-isogeneous elliptic curves 
from extremal rational elliptic surfaces by rational base transformations and quadratic twists.
We then show that each such decomposition yields a description of the Picard-Fuchs system satisfied by the periods of the holomorphic two-form 
as either a tensor product of two Gauss' hypergeometric differential equations, an Appell hypergeometric system, or a GKZ differential system.
As the answer must be independent of the fibration used, identities relating differential systems are obtained.
They include a new identity relating Appell's hypergeometric system to a product of two Gauss' hypergeometric differential equations by a cubic 
transformation.
\end{abstract}

\subjclass[2010]{14J28, 33C6x}

\maketitle

\section{Introduction}
\label{Introduction}
In \cite{MR1013073}  Oguiso studied the Kummer surface $\mathcal{Y}=\operatorname{Kum}(\mathcal{E}_1\times \mathcal{E}_2)$ obtained by the minimal resolution of the quotient surface of the product abelian surface  $\mathcal{E}_1\times \mathcal{E}_2$ by the inversion automorphism, where the elliptic curves $\mathcal{E}_i$ for $i=1,2$ are not mutually isogenous. As it is well known, such a Kummer surface $\mathcal{Y}$ is an algebraic $K3$ surface of Picard rank $18$ and can be equipped with Jacobian elliptic fibrations.
Oguiso classified them, and proved that on $\mathcal{Y}$ there are eleven distinct Jacobian elliptic fibrations, labeled $\mathcal{J}_1, \dots, \mathcal{J}_{11}$. Kuwata and Shioda furthered Oguiso's work in \cite{MR2409557} where they computed elliptic parameters and Weierstrass equations for all
eleven different fibrations, and analyzed the reducible fibers and Mordell-Weil lattices. 

These Weierstrass equations
are in fact families of minimal Jacobian elliptic fibrations over a two-dimensional moduli space.
We denote by $\lambda_i \in \mathbb{P}^1 \backslash \lbrace 0, 1, \infty \rbrace$ for $i=1, 2$ the modular parameter for the
elliptic curve $\mathcal{E}_i$ defined by the Legendre form
\begin{equation}
 y_i^2  = x_i \, (x_i-1) \, (x_i - \lambda_i) \;.
\end{equation} 
The moduli space for the fibrations $\mathcal{J}_1, \dots, \mathcal{J}_{11}$ is then given by unordered pairs
\begin{equation}
\label{ModuliSpace}
 (\tau_1, \tau_2) \in \mathcal{M} = \Big( \Gamma(2) \times \Gamma(2) \Big) \rtimes \mathbb{Z}_2 \backslash \mathbb{H} \times \mathbb{H} \;,
\end{equation}
such that $\lambda_i =\lambda(\tau_i)$ where $\lambda$ is the modular lambda function of level two 
for the genus-zero, index-six congruence subgroup $\Gamma(2) \subset \operatorname{PSL}_2(\mathbb{Z})$, and
the generator of $\mathbb{Z}_2$ acts by exchanging the two parameters. 

Base changes and quadratic twists provide powerful methods to produce new elliptic surfaces from simpler ones.
Miranda and Persson provided in \cite{MR867347} a classification of all extremal rational elliptic surfaces. 
Extremal rational Jacobian elliptic surfaces are among the simplest non-trivial elliptic surfaces.  
The first goal of this article is to construct all eleven Jacobian elliptic fibrations on the Kummer surface 
$\operatorname{Kum}(\mathcal{E}_1\times \mathcal{E}_2)$ from a small number of extremal rational elliptic surfaces  
by using only these two operations. As it will turn out, four extremal rational elliptic surfaces from the list in~\cite{MR867347} will suffice.
For each elliptic fibration $\mathcal{J}_i$ for $i=1,\dots,11$ there is a two-dimensional variety in algebraic correspondence with
$\mathcal{M}$ such that the elliptic fibration $\mathcal{J}_i$  is obtained from an extremal rational Jacobian elliptic fibration 
by base change and quadratic twisting. In this way, the modular parameters $\lambda_1$ and $\lambda_2$ of the elliptic curves $\mathcal{E}_1$ and $\mathcal{E}_2$, respectively,
determine a rational base transformation and quadratic twist for an extremal rational elliptic surface (without moduli)
that yield the Jacobian elliptic fibration on $\operatorname{Kum}(\mathcal{E}_1\times \mathcal{E}_2)$. This will be proved in Section~\ref{SEC:EllFib}.

It is easy to show that the family $q: \mathcal{Y}_{\lambda_1,\lambda_2} \to \mathcal{M}$ is in fact
a projective family of smooth connected projective varieties over $\mathbb{C}$ and $q$ is a proper and smooth morphism.
Moreover, there is a unique holomorphic two-form $\omega$ (up to scaling) on each $K3$ surface  $\mathcal{Y}_{\lambda_1,\lambda_2}$, and
differential equations can be used to express the variation in the cohomology $H^{2,0}(\mathcal{Y}_{\lambda_1,\lambda_2},\mathbb{C})$
as the moduli vary. One of the fundamental problems in Hodge theory is to determine the canonical flat connection, known as the Gauss-Manin connection. 
The connection reduces to a system of differential equations satisfied by the periods of $\omega$ called the Picard-Fuchs system  \cite[Sec. 4, 21]{MR0258824}. 
Since the second homology of a $K3$ surface has rank 22 and the Picard rank of 
$\operatorname{Kum}(\mathcal{E}_1\times \mathcal{E}_2)$ is eighteen if the the two elliptic curves are not mutually isogenous,
there must be four transcendental two-cycles which upon integration with the holomorphic two-form $\omega$ will give
four linearly independent periods. In turn, the Picard-Fuchs system for $\mathcal{Y}_{\lambda_1,\lambda_2}$ must be a system of linear partial differential equations in two variables which is holonomic of rank four.

In our situation, period integrals of the holomorphic two-form $\omega$ over transcendental two-cycles on $\mathcal{Y}_{\lambda_1,\lambda_2}$ 
can be evaluated using any of the eleven elliptic fibrations. Moreover, since we are able to relate every elliptic fibration to an extremal
rational elliptic surface by a rational base transformation and quadratic twist, period integrals reduce to simple iterated double integrals
representing so-called $\mathcal{A}$-hypergeometric functions. In Section~\ref{SEC:periods}, we will determine -- using the geometry of the 
eleven fibrations -- several different descriptions for the Picard-Fuchs system. As the answer must be independent of the fibration used, we obtain identities relating
different GKZ systems, i.e., systems of linear partial differential equations satisfied by  $\mathcal{A}$-hypergeometric functions.
All identities are then summarized in Theorem~\ref{thm:GM}; among them we recover
the linear transformation law for Appell's hypergeometric system, a famous quadratic identity due to Barnes and Bailey, and a new identity relating Appell's hypergeometric system 
to a product of two Gauss' hypergeometric differential equations by a cubic transformation.

\tocless\section{Acknowledgments}\setcounter{section}{1}
The first author acknowledges support from the Undergraduate Research and Creative Opportunities Grant Program
by the Office of Research and Graduate Studies at Utah State University. 
\bigskip

\section{Elliptic fibrations}
\label{SEC:EllFib}
A surface is called a Jacobian elliptic fibration if it is a (relatively) minimal elliptic surface $\pi: \mathcal{X} \to \mathbb{P}^1$ over $\mathbb{P}^1$
with a distinguished section $S_0$. The complete list of possible singular fibers has been given by Kodaira~\cite{MR0184257}. 
It encompasses two infinite families $(I_n, I_n^*, n \ge0)$ and six exceptional cases $(II, III, IV, II^*, III^*, IV^*)$.
To each Jacobian elliptic fibration $\pi: \mathcal{X} \to \mathbb{P}^1$ there is an associated Weierstrass model $\bar{\pi}: \bar{\mathcal{X}\,}\to \mathbb{P}^1$ 
with a corresponding distinguished section $\bar{S}_0$ obtained by contracting
all components of fibers not meeting $S_0$. $\bar{\mathcal{X}\,}$ is always singular 
with only rational double point singularities and irreducible fibers, and $\mathcal{X}$ is the minimal desingularization.
If we choose $t \in \mathbb{C}$ as a local affine coordinate on $\mathbb{P}^1$, we can write $\bar{\mathcal{X}\,}$ in the Weierstrass normal form
\begin{equation}
\label{Eq:Weierstrass}
 y^2 = 4 \, x^3 - g_2(t) \, x - g_3(t) \;,
\end{equation}
where $g_2$ and $g_3$ are polynomials in $t$ of degree four and six, or, eight and twelve 
if $\mathcal{X}$ is a rational surface or a $K3$ surface, respectively. In the following, we will use $t$ and $(x,y)$ as the affine base coordinate and coordinates of the elliptic fiber for a rational elliptic surface,
and $u$ and $(X,Y)$ for an elliptic $K3$ surface. It is of course well known
how the type of singular fibers is read off from the orders of vanishing of the functions $g_2$, $g_3$ and the discriminant $\Delta= g_2^3 - 27 \, g_3^2$ 
at the singular base values. Note that the vanishing degrees of $g_2$ and $g_3$ are always less or equal to three and five, respectively,
as otherwise the singularity of $\bar{X}$ is not a rational double point.

For a family of Jacobian elliptic surfaces $\pi: \mathcal{X} \to \mathbb{P}^1$, the two classes in N\'eron-Severi lattice $\mathrm{NS}(\mathcal{X})$ associated 
with the elliptic fiber and section  span a sub-lattice $\mathcal{H}$ isometric to the standard hyperbolic lattice $H$ with the quadratic form $Q=x_1x_2$, and we have the following decomposition 
as a direct orthogonal sum
\begin{equation*}
 \mathrm{NS}(\mathcal{X}) = \mathcal{H} \oplus \mathcal{W} \;.
\end{equation*}
The orthogonal complement $T(\mathcal{X}) = \mathrm{NS}(\mathcal{X})^{\perp} \in H^2(\mathcal{X},\mathbb{Z})\cap H^{1,1}(\mathcal{X})$ is
called the transcendental lattice and carries the induced Hodge structure.  
Moreover, an elliptic fibration  $\pi$ is called extremal if and only if the rank of the Mordell-Weil group of sections, denoted by $\operatorname{MW}(\pi)$, 
vanishes, i.e., $\operatorname{rank} \operatorname{MW}(\pi)=0$, and the associated elliptic surface has  maximal Picard rank.

\subsection{Extremal rational elliptic surfaces}
\label{ERES}
We describe the subset of the extremal rational elliptic surfaces in \cite{MR867347} that will be needed in Section~\ref{Sec:K3}.
In Table \ref{tab:3ExtRatHg}, $g_2, g_3, \Delta, J= g_2^3 / \Delta$ are the Weierstrass coefficients, discriminant, and $J$-function; 
the ramification points of $J$ and the Kodaira-types of the fibers  over the ramification points are given, as well as the sections that 
generate the Mordell-Weil group of sections.
For the rational families of Weierstrass models in Equation~(\ref{Eq:Weierstrass}) we will use $dx/y$ as the holomorphic 
one-form on each regular fiber of $\bar{\mathcal{X}\,}\!$. It is well-known (cf.~\cite{MR927661}) that the Picard-Fuchs equation is given by the Fuchsian system 
\begin{equation}
\label{FuchsianSystem}
 \frac{d}{dt} \left( \begin{array}{c} \omega_1 \\ \eta_1 \end{array} \right) = \left( \begin{array}{ccc} - \frac{1}{12} \frac{d \ln\Delta}{dt} && \frac{3\,\delta}{2\,\Delta} \\ 
 - \frac{g_2 \, \delta}{8 \, \Delta}& & \frac{1}{12} \frac{d \ln \Delta}{dt} \end{array} \right) \cdot \left( \begin{array}{c} \omega_1 \\ \eta_1 \end{array} \right)  \;,
  \end{equation}
 where $\omega_1 = \oint_{\; \Sigma_1} \frac{dx}{y}$ and $\eta_1 = \oint_{\; \Sigma_1} \frac{x \, dx}{y}$ for each one-cycle $\Sigma_1$
 and with $\delta=3 \, g_3 \, g_2' - 2\, g_2 \, g_3' $.
We have the following lemma:
\begin{lemma}
\label{Lem1}
For $t \not \in \lbrace 0,1, \infty \rbrace$ there is a smooth family of closed one-cycles $\Sigma_1=\Sigma_1(t)$ in the first homology of the elliptic curve given by Equation~(\ref{Eq:Weierstrass})
such that the period integral $\oint_{\; \Sigma_1} \frac{dx}{y}$ for the rational elliptic surfaces in Table~\ref{tab:3ExtRatHg} with $\mu \not = 0$
reduces to the following hypergeometric function holomorphic near $t=0$
\begin{equation}
\label{rk2hgf}
\omega_1= (2 \pi i) \;\hpg21{ \mu, 1-\mu}{1}{t}.
\end{equation}
The period is annihilated by the second-order, degree-one Picard-Fuchs operator
\begin{equation}
\label{L2mu}
 \mathsf{L}_2 = \theta^2 - t \, \big(\theta + \mu\big) \, \big(\theta + 1 -\mu\big) \;.
\end{equation}
For $\mu=0$ in Table~\ref{tab:3ExtRatHg}, the period holomorphic near $t=0$ is given by
\begin{equation}
\label{rk2hgfb}
\omega_1 = (2 \pi i) \;\, \hpg21{ \frac{1}{2}, \frac{1}{2}}{1}{\lambda} \;\hpgo10\!\left.\left(\frac{1}{2} \right| t \right) 
\end{equation}
and annihilated by the first-order, degree-one Picard-Fuchs operator
\begin{equation}
\label{L1}
 \mathsf{L}_1 = \theta - t \, \left(\theta + \frac{1}{2}\right)  \;.
\end{equation}
\end{lemma}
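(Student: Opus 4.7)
The plan is to reduce the $2\times 2$ Fuchsian system~(\ref{FuchsianSystem}) to a scalar second-order ODE for $\omega_1$ alone, and then, after substituting the Weierstrass data from Table~\ref{tab:3ExtRatHg}, to recognize that ODE as a Gauss hypergeometric equation with parameters $(\mu,1-\mu,1)$. First I would solve the first row of~(\ref{FuchsianSystem}) for $\eta_1$ in terms of $\omega_1$ and $\omega_1'$, differentiate, and insert into the second row; this yields a scalar second-order operator annihilating $\omega_1$ with coefficients rational in $t$ built from $g_2, g_3, \Delta$, and $\delta$.

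For each extremal surface in the table, the singular fibers of $\bar{\pi}$ lie over $\{0,1,\infty\}$, so the discriminant $\Delta$ factors as a monomial in $t$ and $(t-1)$; together with the explicit formulas for $g_2, g_3$ this rigidifies the second-order operator, and after clearing the apparent singularities by multiplying $\omega_1$ by a suitable rational power of $\Delta$, one obtains exactly the hypergeometric operator $\mathsf{L}_2$. The parameter $\mu$ is then read off from the local exponents at $t=\infty$ predicted by the Kodaira type of the fiber there, while the vanishing orders of $g_2, g_3, \Delta$ at $t=0,1$ reproduce the required Riemann scheme. Selecting the Frobenius solution holomorphic at $t=0$ gives $\omega_1$ proportional to $\hpg21{\mu,1-\mu}{1}{t}$; the explicit $(2\pi i)$ normalization is fixed by evaluating $\oint_{\Sigma_1} dx/y$ at $t=0$ on a cycle that vanishes into the nodal degeneration of the fiber predicted by the Kodaira type at $0$.

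For the $\mu=0$ entries, the extremal surface is isotrivial: one checks $\delta \equiv 0$ (equivalently $J=g_2^3/\Delta$ is constant in $t$), and the Weierstrass model becomes a quadratic twist of a fixed Legendre elliptic curve $y^2=x(x-1)(x-\lambda)$ by a factor linear in $t$. The period then factors as the complete elliptic integral of that fixed curve, $(2\pi i)\,\hpg21{1/2,1/2}{1}{\lambda}$, multiplied by the twist factor $(1-t)^{-1/2}$, and a one-line computation confirms that $(1-t)^{-1/2}$ is annihilated by $\mathsf{L}_1=\theta-t(\theta+\tfrac{1}{2})$ and coincides with the hypergeometric series indicated in~(\ref{rk2hgfb}).

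The main obstacle is less conceptual than bookkeeping: one must verify entry by entry in Table~\ref{tab:3ExtRatHg} that the exponent $\mu$ extracted from the Kodaira type at $t=\infty$ agrees with the parameter appearing after the reduction to hypergeometric normal form, and that the rational power of $\Delta$ used to kill the apparent singularities at $t=0,1$ is consistent with the torsion sections generating $\operatorname{MW}(\pi)$ in each case. Once this is checked uniformly across the four families, the lemma follows.
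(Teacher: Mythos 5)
The paper itself gives no argument for this lemma: its ``proof'' is a citation to \cite{Doran:2015aa}. Your direct verification is therefore a genuinely self-contained route, and in outline it is the right one: solving the first row of~(\ref{FuchsianSystem}) for $\eta_1$, differentiating, and substituting into the second row produces a scalar second-order equation for $\omega_1$, and plugging in the data of Table~\ref{tab:3ExtRatHg} identifies it with Gauss' equation; for the isotrivial family $\mathcal{X}_{11}(\lambda)$ one has $\delta\equiv 0$ and $J$ constant, the model is the quadratic twist by $(t-1)$ of a fixed curve with parameter $\lambda$, the period acquires the factor $(1-t)^{-1/2}$, and $(1-t)^{-1/2}$ is annihilated by $\mathsf{L}_1$ exactly as you say. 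Two refinements are needed. First, drop the hedge about multiplying $\omega_1$ by a rational power of $\Delta$: for the lemma as stated no such rescaling is permitted (the claim is that the period itself, not a twist of it, equals $(2\pi i)\,\hpg21{\mu,\,1-\mu}{1}{t}$), and in fact none is needed — the elimination lands on the hypergeometric operator on the nose. For $\mathcal{X}_{211}$ one finds $\delta=-12$ and $t(1-t)\,\omega_1''+(1-2t)\,\omega_1'-\tfrac{5}{36}\,\omega_1=0$, and the same computation for $\mathcal{X}_{411}$ and $\mathcal{X}_{222}$ gives $t(1-t)\,\omega_1''+(1-2t)\,\omega_1'-\tfrac14\,\omega_1=0$; this also makes the detour through exponents at $t=\infty$ and the torsion sections unnecessary, since $\mu$ simply comes out of the computation. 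Second, the normalization is where the real bookkeeping lives: the holomorphic-at-$0$ solution is indeed the period over the cycle vanishing into the node above $t=0$, and its value at $t=0$ is a residue at that node; with the Weierstrass data exactly as printed in Table~\ref{tab:3ExtRatHg} this residue equals $\pi i/\sqrt{e-e_1}$ (node $e$, remaining root $e_1$), e.g.\ $\pi i\sqrt{2/3}$ for $\mathcal{X}_{211}$, so recovering the literal prefactor $(2\pi i)$ requires fixing the cycle $\Sigma_1$ and the scaling conventions as in \cite{Doran:2015aa} rather than asserting the constant; this affects only the overall constant, not the operators $\mathsf{L}_2$ and $\mathsf{L}_1$, which are what the rest of the paper uses.
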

\begin{proof}
The proof was given in \cite{Doran:2015aa}.
\end{proof}

\begin{remark}
The names of the Jacobian elliptic surfaces in Table \ref{tab:3ExtRatHg} coincide with the ones used by Miranda and Persson \cite{MR867347} and
Herfurtner \cite{MR1129371}. 
\end{remark}

\begin{remark}
The definition and basic properties of the hypergeometric functions $\hpgo21$ will be given in Section~\ref{EulerIntegrals}.
\end{remark}

\begin{remark}
\label{Rem:dual_period}
For the rational elliptic surfaces in Table~\ref{tab:3ExtRatHg} with $\mu \not =0$, there is a smooth family of closed dual one-cycles $\Sigma'_1=\Sigma'_1(t)$
such that the period integral reduces to the second, linearly independent solution annihilated by the operator~(\ref{L2mu}) that has a singular point at $t=0$ and is given by
\begin{equation}
\label{rk2hgf_dual}
\omega'_1 = \oint_{\; \Sigma_1'} \frac{dx}{y} =  \dfrac{(2 \pi i)}{t^{\mu}} \; \hpg21{ \mu, \mu}{2\mu}{\frac{1}{t}} \;.
\end{equation}
\end{remark}

\subsection{$K3$ fibrations from base transformations and twists}
\label{Sec:K3}
Rational base changes provide a convenient method to produce Jacobian elliptic $K3$ surfaces from rational elliptic surfaces. The set-up is as follows: suppose we have a rational Jacobian elliptic surface
$\pi: \mathcal{X} \to C_\mathcal{X} = \mathbb{P}^1$ over the rational base curve $C_\mathcal{X} $.  To apply a base change, we need a rational ramified cover 
$\mathbb{P}^1 \to C_\mathcal{X}=\mathbb{P}^1$ of  degree $d$ mapping surjectively to $C_\mathcal{X}$. To be precise, for each $[u:1] \in \mathbb{P}^1$ we set $t= p(u)/u^n$ for $n \in \mathbb{N}$ where 
$p$ is a polynomial of degree $d > n \ge 0$ with the following three properties: (1) the  points $t=0$ and $t=1$ have $d$ pre-images each with branch numbers zero; (2)
$t=\infty$ is a branching point with corresponding ramification points $u=\infty$ with branch number $d-n-1$ and $u=0$ with branch number $n-1$ if $n \ge 1$;
(3) there are $d$ additional ramification points not coincident with $\lbrace 0, 1, \infty\rbrace$ with branch number $1$. 
The Riemann-Hurwitz formula $g-1=B/2+d \cdot (g'-1)$ is then satisfied for $g=g'=0$, $B=(d-n-1)+(n-1)+d$.
The base change is defined as the following fiber product:
\begin{equation}
\begin{array}{ccc}
   \mathcal{Y}:=\mathcal{X} \times_{C_\mathcal{X}} \mathbb{P}^1 & \longrightarrow  & \mathbb{P}^1 \\
   \downarrow && \downarrow\\
   \mathcal{X} & \longrightarrow  & C_\mathcal{X}
\end{array}
\end{equation} 
Generically, one expects $d=2$ in order to turn a rational surface into a $K3$ surface by a rational base change. However, the extremal rational elliptic surfaces from
Section~\ref{ERES} have star-fibers at $t=\infty$ whence values with $2 \le d \le 4$ can all produce $K3$ surfaces as well. We then obtain Jacobian elliptic $K3$ surfaces
with $d$ singular fibers of the same Kodaira-type as the rational elliptic surface $\mathcal{X}$ has at $u=0$ and $u=1$, respectively.
The effect of a base change on the singular fiber at $t=\infty$  depends on the local ramification of the cover $\mathbb{P}^1 \to C_\mathcal{X}=\mathbb{P}^1$.

Two elliptic surfaces with the same $J$-map have the same singular fibers up to some quadratic twist.
The effect of a quadratic twist on the singular fibers is as follows:
\begin{equation}
I_n \leftrightarrow I_n^*, \qquad II \leftrightarrow IV^*, \qquad III \leftrightarrow III^*, \qquad IV \leftrightarrow II^* \;.
\end{equation}
It is well-known that any two elliptic surfaces that are quadratic twists of each other become isomorphic after a suitable finite base change~\cite{MR2732092}.
For us,  quadratic twisting is understood by starting with the Weierstrass equation~(\ref{Eq:Weierstrass}) and replacing it by the following Weierstrass equation
for $\bar{\mathcal{Y}\,}$
\begin{equation}
\label{Eq:Weierstrass_b}
 Y^2 = 4 \, X^3 - g_2\left(\frac{p(u)}{u^n}\right) \, T(u)^2 \; X - g_3\left(\frac{p(u)}{u^n}\right)  \, T(u)^3 \;,
\end{equation}
where $T$ is a quadratic polynomial in $u$, and we have already combined the twisting with the aforementioned rational base transformation.
We will always require that Equation~(\ref{Eq:Weierstrass_b}) is a minimal Weierstrass fibration.

We then have the following result constructing each Jacobian elliptic fibration on $\mathcal{Y}=\operatorname{Kum}(\mathcal{E}_1\times \mathcal{E}_2)$
from extremal rational elliptic surfaces:
\begin{proposition}
\label{Prop1}
We have the following statements:
\begin{enumerate}
\item[(1)] The Jacobian elliptic fibrations $\mathcal{J}_1, \dots,$ $\mathcal{J}_7, \mathcal{J}_9$ given in \cite{MR2409557,MR1013073} 
on the Kummer surface $\operatorname{Kum}(\mathcal{E}_1\times \mathcal{E}_2)$ are obtained in Equation~(\ref{Eq:Weierstrass_b}) from the extremal Jacobian elliptic surfaces given 
in Table~\ref{tab:3ExtRatHg}  by using the rational base transformations $t=t_i(u)$ and quadratic twists $T=T_i(u)$  in Table~\ref{tab:KummerFibs} for $i=1, \dots, 9$.
\item[(2)] For $i \in \lbrace 1,2,3,7,9 \rbrace$ the formulas  in Table~\ref{tab:KummerFibs} are given over the quadratic field extension $K[d_i]$ of the field $K=\mathbb{C}(\lambda_1,\lambda_2)$ of moduli 
of $\mathcal{E}_1$ and $\mathcal{E}_2$. Table~\ref{tab:KummerFibs} presents $d_i^2$ as a polynomial in terms of their elliptic modular parameters $\lambda_1$ and $\lambda_2$.
\end{enumerate}
\end{proposition}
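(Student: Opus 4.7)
The plan is to verify the proposition by direct construction: for each $i \in \{1,\dots,7,9\}$ I would produce the triple $(\mathcal{X}_i, t_i(u), T_i(u))$ explicitly and check that substituting into Equation~(\ref{Eq:Weierstrass_b}) reproduces the Kuwata--Shioda Weierstrass equation of $\mathcal{J}_i$ on $\operatorname{Kum}(\mathcal{E}_1\times\mathcal{E}_2)$ recorded in \cite{MR2409557}. The first step is to identify the correct extremal rational surface $\mathcal{X}_i$ from Table~\ref{tab:3ExtRatHg} for each $i$ by a Kodaira-fiber count. The discriminant of the Kuwata--Shioda equation for $\mathcal{J}_i$ factors over $K[u]$ with $K=\mathbb{C}(\lambda_1,\lambda_2)$ into pieces whose multiplicities reveal the singular-fiber configuration; matching this configuration to the $d$-fold preimage pattern described in Section~\ref{Sec:K3} (namely $d$ copies of the fibers over $t=0$ and $t=1$, together with the twist-modified fiber at $t=\infty$ controlled by $n$) pins down $\mathcal{X}_i$, the degree $d$ of $t_i$, and the integer $n\ge 0$ governing the local ramification at $u=0$.

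With $\mathcal{X}_i$ and the integers $(d,n)$ fixed, I would determine $t_i(u)=p_i(u)/u^{n}$, up to the subgroup of $\operatorname{Aut}(\mathbb{P}^1)$ fixing $\{0,1,\infty\}$, by the $J$-invariant matching condition $J_{\mathcal{X}_i}\circ t_i = J_{\mathcal{J}_i}$ as rational functions of $u$ with coefficients in $K$. The residual ambiguity in the choice of coordinate $u$ is fixed by demanding that the $d$ preimages of $t=0$ and of $t=1$ coincide with the roots of the corresponding factors of the Kuwata--Shioda discriminant $\Delta_{\mathcal{J}_i}$. This matching is precisely what forces the passage to the quadratic extension $K[d_i]$ appearing in part~(2): for $i\in\{1,2,3,7,9\}$ the relevant ramification points of $t_i$ are roots of a quadratic polynomial in $u$ over $K$, and $d_i^{2}$ must be set equal to its discriminant, yielding the polynomial in $\lambda_1,\lambda_2$ tabulated in Table~\ref{tab:KummerFibs}.

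Once $t_i(u)$ is in hand, the quadratic twist $T_i(u)$ is almost forced: comparing $g_2(t_i(u))\,T_i(u)^{2}$ and $g_3(t_i(u))\,T_i(u)^{3}$ to the Weierstrass coefficients of $\mathcal{J}_i$ requires $T_i(u)$ to absorb exactly the remaining polynomial discrepancy between the pulled-back $\mathcal{X}_i$-equation and the target $\mathcal{J}_i$-equation. The consistency of the $g_2$ and $g_3$ comparisons, which is guaranteed by the $J$-invariant identity already arranged, shows that such a $T_i(u)$ exists and is unique up to a global constant; the minimality requirement on the Weierstrass fibration (Eq:Weierstrass_b) removes this last scaling freedom. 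A direct substitution then produces the equation listed in \cite{MR2409557}.

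The main obstacle is the case-by-case nature of the verification, together with the bookkeeping needed in the cases $i\in\{1,2,3,7,9\}$ to carry out every computation over $K[d_i]$ while keeping $t_i$ and $T_i$ in compact polynomial form in $u$ and $d_i$. Once the matching of Kodaira fibers organizes each case, however, the remaining work reduces to a finite algebraic identity of Weierstrass coefficients that can be checked mechanically; the existence of a consistent choice of $(\mathcal{X}_i,t_i,T_i)$ for every $i\in\{1,\dots,7,9\}$ is the real content, while the specific formulas in Table~\ref{tab:KummerFibs} are then obtained by carrying out the procedure above.
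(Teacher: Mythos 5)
Your proposal takes essentially the same route as the paper: a case-by-case explicit comparison, for each $i\in\{1,\dots,7,9\}$, of the Weierstrass data obtained from the tabulated base transformation and quadratic twist with the Kuwata--Shioda equations, the choice of extremal surface and the quadratic extension $K[d_i]$ being read off from the singular-fiber and $J$-map matching. The only additional detail in the paper's proof is that the Kuwata--Shioda models must first be normalized by $(Y,X)\mapsto(Y/2,X+p(\lambda_1,\lambda_2;u))$, with an extra base-coordinate change for $\mathcal{J}_5$, before the comparison; your allowance for coordinate ambiguities covers this implicitly.
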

\begin{proof}
For each fibration we apply a transformation $(Y,X) \mapsto (Y/2,X+p(\lambda_1,\lambda_2; u))$ to the elliptic fibrations in \cite{MR2409557} 
-- where $p(\lambda_1,\lambda_2; u)$ is a polynomial in the modular parameters and the affine coordinate $u$ --  to obtain a Jacobian elliptic fibration in Weierstrass normal form.
In addition, for $\mathcal{J}_5$ we apply the transformation $(Y,X,u) \mapsto (Y/u^6,X/u^4,1+1/u)$ to move the singular fibers into convenient positions.
The proof then follows by comparing the obtained  Weierstrass normal forms with the ones obtained in Equation~(\ref{Eq:Weierstrass_b}) from the extremal Jacobian elliptic surfaces given 
in Table~\ref{tab:3ExtRatHg}  by using the rational base transformations $t=t_i(u)$ and quadratic twists $T=T_i(u)$  in Table~\ref{tab:KummerFibs} for $i=1, \dots, 9$.
\end{proof}
\begin{remark}
For $\mathcal{J}_4, \mathcal{J}_5, \mathcal{J}_6$ the base transformations and twists do not depend on a quadratic field extension. 
In these cases, the decomposition into a rational base transformation and quadratic twist is well-defined over the function field $K$ itself.
\end{remark}
The remaining fibrations, i.e., $\mathcal{J}_8$, $\mathcal{J}_{10}$, and $\mathcal{J}_{11}$, are found to be related to other Jacobian elliptic 
fibrations by rational transformations that leave the holomorphic two-form invariant.
We have the following proposition:
\begin{proposition}
\label{Prop2}
The Jacobian elliptic fibrations $\mathcal{J}_8, \mathcal{J}_{10}, \mathcal{J}_{11}$ given in \cite{MR2409557, MR1013073} 
on the Kummer surface $\mathcal{Y}=\operatorname{Kum}(\mathcal{E}_1\times \mathcal{E}_2)$ are obtained from the Jacobian elliptic fibrations $\mathcal{J}_7, \mathcal{J}_{9}$, and $\mathcal{J}_{7}$,
respectively, by the rational transformations given in Table~\ref{tab:KummerRels} that leave the holomorphic two-form invariant.
\end{proposition}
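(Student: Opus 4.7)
The plan is to follow the same strategy used in Proposition~\ref{Prop1}: argue directly at the level of Weierstrass normal forms. Proposition~\ref{Prop1} already gives us explicit Weierstrass equations for $\mathcal{J}_7$ and $\mathcal{J}_9$ as pullbacks via Equation~(\ref{Eq:Weierstrass_b}) from entries of Table~\ref{tab:3ExtRatHg}. On the other side, the fibrations $\mathcal{J}_8$, $\mathcal{J}_{10}$, $\mathcal{J}_{11}$ have explicit Weierstrass presentations recorded in \cite{MR2409557,MR1013073}. Thus what needs to be checked for each of the three cases is a concrete identity between two polynomial Weierstrass equations.

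The first step is, for each of the three pairs, to bring the target fibration into standard Weierstrass normal form $Y^2=4X^3-g_2(u)X-g_3(u)$ by a translation of $X$ by a polynomial in $u$ and $(\lambda_1,\lambda_2)$, exactly as was done in the proof of Proposition~\ref{Prop1}. The second step is to take the rational map $(u,X,Y)\mapsto (u',X',Y')$ prescribed by Table~\ref{tab:KummerRels} and substitute it into the Weierstrass equation of $\mathcal{J}_7$ (respectively $\mathcal{J}_9$). After clearing denominators and rescaling $X'$, $Y'$ by appropriate powers of the denominator, one should obtain precisely the Weierstrass equation of $\mathcal{J}_8$ (respectively $\mathcal{J}_{10}$, $\mathcal{J}_{11}$). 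The third step is to verify that the rational map is birational by exhibiting its inverse, which is forced by matching degrees of the base transformation $u'=u'(u)$ and checking that the induced map on the generic fiber is an isomorphism of elliptic curves. This identifies the two K3 surfaces.

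Finally, invariance of the holomorphic two-form is a short Jacobian computation. On a Jacobian elliptic K3 in the form of Equation~(\ref{Eq:Weierstrass_b}), the (unique up to scalar) holomorphic two-form is
\begin{equation*}
\omega=\frac{du\wedge dX}{Y}.
\end{equation*}
Under a substitution $u'=\varphi(u)$, $X'=\psi(u)\,X+\chi(u)$, $Y'=\psi(u)^{3/2}\,Y$ of the type appearing in Table~\ref{tab:KummerRels}, the pullback satisfies $du'\wedge dX'/Y'=\varphi'(u)\,\psi(u)^{-1/2}\,du\wedge dX/Y$, so invariance reduces to the single scalar identity $\varphi'(u)=\psi(u)^{1/2}$ between rational functions of $u$ with coefficients in $K[d_i]$, which one verifies by direct computation in each of the three cases.

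The main obstacle is not conceptual but bookkeeping: the Weierstrass coefficients $g_2,g_3$ for the fibrations $\mathcal{J}_7$ and $\mathcal{J}_9$ are polynomials of degree up to twelve in $u$ whose coefficients are themselves polynomials in $\lambda_1,\lambda_2$, and verifying the substitutions requires careful algebraic manipulation (ideally performed in a computer algebra system) together with the quadratic relations $d_i^2=\cdots$ recorded in Table~\ref{tab:KummerFibs} to simplify the expressions into the target normal form.
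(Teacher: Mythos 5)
There is a genuine gap, and it lies in your description of what the transformations in Table~\ref{tab:KummerRels} actually are. You assume they have the shape $u'=\varphi(u)$, $X'=\psi(u)X+\chi(u)$, $Y'=\psi(u)^{3/2}Y$, i.e.\ a base change followed by a fiberwise linear isomorphism, and you reduce invariance of the two-form to the identity $\varphi'(u)=\psi(u)^{1/2}$. But the table's formulas are not of this type: for instance $u_8=u_7^2(u_7-1)/x_7$ depends on the \emph{fiber} coordinate $x_7$, and $x_8,y_8$ are nonlinear rational expressions in both $u_7$ and $x_7$. The point of Proposition~\ref{Prop2} is precisely that $\mathcal{J}_8$, $\mathcal{J}_{10}$, $\mathcal{J}_{11}$ are \emph{different} elliptic pencils on the same Kummer surface, obtained from $\mathcal{J}_7$ resp.\ $\mathcal{J}_9$ by a birational self-map of the surface that scrambles base and fiber directions (a ``neighbor''-type construction), not by a base change with a fiberwise isomorphism. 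Consequently your third step (``exhibit the inverse by matching degrees of the base transformation $u'=u'(u)$'') is not meaningful as stated, and your two-form computation does not apply: the pullback of $du'\wedge dX'/Y'$ must be computed as a rational two-form on the surface, using the Weierstrass relation to eliminate $dY$ (or $dx_7$), and one checks directly that the full two-variable Jacobian factor equals $1$; it does not factor into a base-change derivative times a fiber-scaling factor.

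With that correction the argument becomes what the paper actually does: an explicit computation that (i) the substitutions from Table~\ref{tab:KummerRels} carry the Weierstrass model of $\mathcal{J}_7$ (resp.\ $\mathcal{J}_9$) to that of $\mathcal{J}_8$, $\mathcal{J}_{11}$ (resp.\ $\mathcal{J}_{10}$), and (ii) the pullback of $du\wedge dX/Y$ is unchanged. Your first step (normalizing the fibrations of \cite{MR2409557} by $(Y,X)\mapsto(Y/2,X+p(\lambda_1,\lambda_2;u))$, as in Proposition~\ref{Prop1}) is fine; note the paper's additional observation that this normalization rescales $\omega$ by the same constant factor of two for each fibration, so it does not affect the invariance statement.
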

\begin{proof}
The proof follows by explicit computation. The transformation $(Y,X) \mapsto (Y/2,X+p(\lambda_1,\lambda_2; u))$ 
rescales the holomorphic two-form  $\omega=du \wedge dx/y$ by a constant factor of two for \emph{each} fibration, and therefore does not affect the result.
\end{proof}

\section{Period integrals}
\label{SEC:periods}
In \cite{MR902936,MR948812} Gel'fand, Kapranov and Zelevinsky defined a general class of hypergeometric functions, encompassing 
the classical one-variable hypergeometric functions,  the Appell and Lauricella functions. Today they are known as 
GKZ hypergeometric functions and provide an elegant basis for a theory of hypergeometric functions in several variables. 
Integral representations for these functions generalizing  the classical integral transform for Gauss' hypergeometric function found by Euler 
are known as $\mathcal{A}$-hypergeometric functions and were studied in \cite{MR1080980}.

\subsection{Euler integrals}
\label{EulerIntegrals}
The classical Euler integral transform
for Gauss' hypergeometric function $\hpgo21$ for $\re(\gamma)>\re(\beta)>0$ is given by
\begin{equation}
\label{GaussIntegral}
\hpg21{\alpha,\,\beta}{\gamma}{z} = \frac{\Gamma(\gamma)}{\Gamma(\beta) \, \Gamma(\gamma-\beta)} \, \int_0^1 (1-x)^{\gamma-\beta-1} \, (1- z\, x)^{-\alpha} \; x^{\beta-1} \, dx\;.
\end{equation}
The differential equation satisfied by $\hpgo21$ is
\begin{equation} \label{eq:euler}
z(1-z)\;\frac{d^2f}{dz^2}+
\big(\gamma-(\alpha+\beta+1)\, z\big) \; \frac{df}{dz}-\alpha\,\beta\,f=0.
\end{equation}
Equation~(\ref{eq:euler}) is a Fuchsian\footnote{Fuchsian means linear homogeneous and with regular singularities.} equation with three regular singularities at $z=0$, $z=1$ and $z=\infty$ with local exponent differences equal to $1-\gamma$, $\gamma-\alpha-\beta$, and $\alpha-\beta$, respectively.
For $\alpha=1-\beta=\mu$ and $\gamma=1$, it coincides with the differential operator $\mathsf{L}_2$ in Equation~(\ref{L2mu}).
The linear differential equation satisfied by the hypergeometric function $\hpgo21$
when written as a first-order Pfaffian system will be denoted by $\hpgdo21$ with
\begin{equation}
\label{PfaffianSystem2F1}
\hpgd21{\alpha,\,\beta}{\gamma}{z}: \quad d \vec{f}_{z} = \Omega^{(\,_2F_1)}_{z} \cdot \vec{f}_{z} 
\end{equation}
for the vector-valued function
$$
  \vec{f}_{z} = \langle f(z), \,  \theta_{z} f(z) \rangle^t 
$$ 
with $\theta_{z}= z \, \partial_{z}$. The Pfaffian matrix associated with the differential equation~(\ref{eq:euler}) is given by
 \begin{equation}
 \label{connection2F1}
\Omega^{(\,_2F_1)}_{z} = \left(
\begin {array}{cc} 
 0& \frac {1}{z} \\ 
 - \frac{\alpha \beta}{z-1} 
 	& \left( \frac{1-\gamma}{z} + \frac{\gamma-\alpha-\beta-1}{z-1} \right) 
\end {array}
   \right) \; dz\;.
 \end{equation}
The outer tensor product of two rank-two Pfaffian systems is constructed by introducing $\vec{H}_{z_1,z_2} =  \vec{f}_{z_1} \boxtimes \vec{f}_{z_2} $, i.e.,
\begin{equation*}
\begin{split}
  \vec{H}_{z_1,z_2} = \, \langle f(z_1) \, f(z_2), \;  \theta_{z_1} f(z_1) \, f(z_2),  \;  f(z_1) \, \theta_{z_2} f(z_2),  \; \theta_{z_1} f(z_1) \, \theta_{z_2}f(z_2)\rangle^t \;.
\end{split}
\end{equation*}
 The associated Pfaffian system is the rank-four system
\begin{equation}
\label{PfaffianSystemSqr}
 \hpgd21{\alpha_1,\,\beta_1}{\gamma_1}{z_1} \boxtimes \hpgd21{\alpha_1,\,\beta_1}{\gamma_2}{z_2}: \quad d\vec{H}_{z_1,z_2} = \Omega^{(\,_2F_1 \boxtimes \,_2F_1)}_{z_1,z_2}\cdot \vec{H}_{z_1,z_2}
\end{equation}
with the connection form
\begin{equation}
  \Omega^{(\,_2F_1 \boxtimes \,_2F_1)}_{z_1,z_2} =  \Omega^{(\,_2F_1)}_{z_1} \boxtimes \mathbb{I} +   \mathbb{I}  \boxtimes   \Omega^{(\,_2F_1)}_{z_2}  \;.
\end{equation}

The multivariate Appell's hypergeometric function $\appo2$ has an integral representation for $\re{(\gamma_1)} > \re{(\beta_1)} > 0$ and $\re{(\gamma_2)} > \re{(\beta_2)} > 0$
given by
\begin{equation}
\label{IntegralFormula}
\begin{split}
\app2{\alpha;\;\beta_1,\beta_2}{\gamma_1,\gamma_2}{z_1, z_2} = \frac{\Gamma(\gamma_1) \, \Gamma(\gamma_2)}{\Gamma(\beta_1) \, \Gamma(\beta_2) \, \Gamma(\gamma_1 - \beta_1) \, \Gamma(\gamma_2-\beta_2)} \quad \qquad\\
\times \, \int_0^1 dt \int_0^1 dx \; 
\frac{1}{t^{1-\beta_2} \, (1-t)^{1+\beta_2-\gamma_2} \, x^{1-\beta_1} \, (1-x)^{1+\beta_1-\gamma_1} \, (1-z_1 \, x - z_2 \, t)^{\alpha}} \;.
\end{split}
\end{equation}
Appell's function $\appo2$ satisfies a Fuchsian system of partial differential equations analogous to the hypergeometric equation for the function $\hpgo21$.
The system of linear partial differential equations satisfied by $\appo2$ is given by
\begin{equation} \label{app2system}
\begin{split}
z_1(1-z_1)\frac{\partial^2F}{\partial z_1^2}-z_1z_2\frac{\partial^2F}{\partial z_1\partial z_2}
+\left(\gamma_1-(\alpha+\beta_1+1)\, z_1\right)\frac{\partial F}{\partial z_1}-\beta_1z_2\frac{\partial F}{\partial z_2}
-\alpha \beta_1F=0,\\
z_2(1-z_2)\frac{\partial^2F}{\partial z_2^2}-z_1z_2\frac{\partial^2F}{\partial z_1\partial z_2}
+\left(\gamma_2-(\alpha+\beta_2+1)\, z_2\right)\frac{\partial F}{\partial z_2}-\beta_2  z_1\frac{\partial F}{\partial z_1}
- \alpha \beta_2F=0.
\end{split}
\end{equation}
This is a holonomic system of rank four whose singular locus on $\mathbb{P}^1\times\mathbb{P}^1$ is the union of the following lines 
\begin{equation} \label{app2sing}
z_1=0,\quad z_1=1,\quad z_1=\infty, \quad z_2=0,\quad z_2=1,\quad z_2=\infty, \quad z_1+z_2=1.
\end{equation}
The system (\ref{app2system}) of differential equations satisfied by the Appell hypergeometric function 
when written as the Pfaffian system will be denoted by $\appdo2$ with
\begin{equation}
\label{PfaffianSystemF2}
 \appd2{\alpha;\;\beta_1,\beta_2}{\gamma_1,\gamma_2}{z_1, z_2}: \quad d\vec{F}_{z_1,z_2} = \Omega^{(F_2)}_{z_1,z_2}  \cdot \vec{F}_{z_1,z_2} 
\end{equation}
for the vector-valued function
$$
  \vec{F}_{z_1,z_2}  = \langle F, \; \theta_{z_1} F,   \; \theta_{z_2} F, \; \theta_{z_1}\theta_{z_2} F \rangle^t \;
$$  
with $\theta_{z_i}= z_i \, \partial_{z_i}$ for $i=1, 2$. The Pfaffian matrix associated with~(\ref{app2system}) has rank four and its explicit form
is found in~\cite{MR1086776}.

The connection between the hypergeometric function $\hpgo21$  and Appell's hypergeometric function $\appo2$ is given by an integral transform
that was proved in ~\cite{Clingher:2015aa}:
\begin{lemma}
\label{EulerIntegralTransform}
For $\re{(\gamma_1)} > \re{(\beta_1)} > 0$ and $\re{(\gamma_2)} > \re{(\beta_2)} > 0$, we have the following relation between
the hypergeometric function and Appell's hypergeometric function:
\begin{equation}
\label{IntegralTransform}
\begin{split}
 \frac{1}{A^{\alpha}} \;
 \app2{\alpha;\;\beta_1,\beta_2}{\gamma_1,\gamma_2}{\frac{1}{A},  1 - \frac{B}{A}} = - \frac{\Gamma(\gamma_2) \, (A-B)^{1-\gamma_2}}{ \Gamma(\beta_2)\, \Gamma(\gamma_2-\beta_2)} \quad \\
 \times \;  \int_A^B \frac{dt}{ (A-t)^{1-\beta_2} \, (t-B)^{1+\beta_2-\gamma_2}} \, \frac{1}{t^{\alpha}} \; \hpg21{\alpha,\,\beta_1}{\gamma_1}{\frac{1}{t}}  \;.
\end{split} 
\end{equation}
\end{lemma}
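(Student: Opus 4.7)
The plan is to start from the Euler double-integral representation of $\appo2$ given in Equation~(\ref{IntegralFormula}) and perform the inner integral explicitly using the Euler integral representation of $\hpgo21$ in Equation~(\ref{GaussIntegral}), then recognize the remaining single integral as the right-hand side of Equation~(\ref{IntegralTransform}) after a linear change of variables.

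\medskip

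\noindent\textbf{Step 1: Set up the double integral.} Substitute $z_1=1/A$ and $z_2=1-B/A$ into the formula for $\appo2$. Pull the factor $A^{-\alpha}$ out, and rewrite the kernel as
\begin{equation*}
\bigl(1-z_1 x-z_2 t\bigr)^{-\alpha}
=\bigl(1-(1-B/A)\,t\bigr)^{-\alpha}\left(1-\dfrac{x/A}{1-(1-B/A)\,t}\right)^{-\alpha}\;,
\end{equation*}
which separates the $x$-dependence into a form suitable for the classical Euler transform.

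\medskip

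\noindent\textbf{Step 2: Perform the $x$-integral.} Treating $t$ as a parameter and applying Equation~(\ref{GaussIntegral}), the inner integral in $x$ yields
\begin{equation*}
\frac{\Gamma(\beta_1)\,\Gamma(\gamma_1-\beta_1)}{\Gamma(\gamma_1)}\,
\bigl(1-(1-B/A)\,t\bigr)^{-\alpha}\;\hpg21{\alpha,\beta_1}{\gamma_1}{\dfrac{1/A}{1-(1-B/A)\,t}}\;.
\end{equation*}
The gamma-factor cancels with the one multiplying the $x$-variables in Equation~(\ref{IntegralFormula}), leaving only the $t$-integration.

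\medskip

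\noindent\textbf{Step 3: Change variable in the $t$-integral.} Introduce $s=A-(A-B)\,t$, so that $1-(1-B/A)\,t = s/A$ and the argument of $\hpgo21$ becomes $1/s$. The endpoints map $t=0\mapsto s=A$ and $t=1\mapsto s=B$, while the Jacobian contributes $dt=-ds/(A-B)$. The factors $t^{\beta_2-1}$, $(1-t)^{\gamma_2-\beta_2-1}$, and $(1-(1-B/A)\,t)^{-\alpha}$ become $(A-s)^{\beta_2-1}(A-B)^{1-\beta_2}$, $(s-B)^{\gamma_2-\beta_2-1}(A-B)^{\beta_2-\gamma_2+1}$, and $A^{\alpha}\,s^{-\alpha}$ respectively. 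Collecting powers of $(A-B)$ yields the overall prefactor $-(A-B)^{1-\gamma_2}$, and multiplying through by $A^{-\alpha}$ and by the remaining gamma ratio $\Gamma(\gamma_2)/[\Gamma(\beta_2)\Gamma(\gamma_2-\beta_2)]$ reproduces exactly Equation~(\ref{IntegralTransform}).

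\medskip

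\noindent\textbf{Obstacles.} The computation itself is a routine manipulation; the only delicate point is the choice of branches of $(A-s)^{\beta_2-1}$ and $(s-B)^{1+\beta_2-\gamma_2}$ along the contour from $A$ to $B$, together with the interpretation of the orientation-induced sign, and the range of parameters $(A,B)$ for which the inner Euler integral converges and the argument $1/[A(1-(1-B/A)t)]$ stays in the unit disk. These issues are handled by standard analytic continuation once the identity is established in the region where both sides are manifestly defined, so no deeper argument beyond Fubini and the Euler representations is needed.
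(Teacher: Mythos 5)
Your computation is correct: the factorization $1-\tfrac{x}{A}-(1-\tfrac{B}{A})t=\bigl(1-(1-\tfrac{B}{A})t\bigr)\bigl(1-\tfrac{x/A}{1-(1-B/A)t}\bigr)$, the inner Euler integral in $x$ producing $\hpgo21$ with argument $\tfrac{1/A}{1-(1-B/A)t}$, and the affine substitution $s=A-(A-B)t$ (so $1-(1-B/A)t=s/A$, $t^{\beta_2-1}(1-t)^{\gamma_2-\beta_2-1}dt=-(A-B)^{1-\gamma_2}(A-s)^{\beta_2-1}(s-B)^{\gamma_2-\beta_2-1}ds$) reproduce Equation~(\ref{IntegralTransform}) exactly, sign and $(A-B)^{1-\gamma_2}$ prefactor included. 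Note that the paper does not prove Lemma~\ref{EulerIntegralTransform} at all; it cites \cite{Clingher:2015aa}, where this transform (and the stronger solution-level identities of Barnes--Bailey type mentioned in Remark~\ref{RefCDM}) are obtained in the setting of period integrals on superelliptic curves and generalized Kummer varieties. Your route is the elementary one -- Fubini on the double Euler kernel of $\appo2$ plus the one-variable Euler representation -- and it buys a short, self-contained analytic proof of exactly the statement needed, at the price of not giving the geometric interpretation or the stronger identity between explicit solutions. One small refinement to your ``obstacles'' paragraph: the Euler representation~(\ref{GaussIntegral}) does not require the argument to lie in the unit disk, only that it avoid the cut $[1,\infty)$; so the relevant condition along the contour is that $s=A(1-(1-B/A)t)$ stay off the segment $[0,1]$, with branches of $(A-s)^{\beta_2-1}$ and $(s-B)^{\gamma_2-\beta_2-1}$ fixed continuously along the path from $A$ to $B$, after which analytic continuation in $(A,B)$ and in the parameters gives the full range $\re(\gamma_i)>\re(\beta_i)>0$ as you say.
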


\subsection{Differential systems from fibrations $\mathcal{J}_4$, $\mathcal{J}_6$, $\mathcal{J}_7$, $\mathcal{J}_9$}
As a reminder, $\lambda_1$ and $\lambda_2$ are the modular parameters of the elliptic curves  $\mathcal{E}_1$ and $\mathcal{E}_2$, respectively.
For the fibration $\mathcal{J}_4$ we have the following lemma:
\begin{lemma}
\label{lem:J4}
The Picard-Fuchs system for the periods of the holomorphic two-form on the family 
$\operatorname{Kum}(\mathcal{E}_1\times \mathcal{E}_2)$ is given by
\begin{equation}
\label{PF_J4}
 \hpgd21{\frac{1}{2},\,\frac{1}{2}}{1}{\lambda_1} \boxtimes  \hpgd21{\frac{1}{2},\,\frac{1}{2}}{1}{\lambda_2} \;.
\end{equation}
\end{lemma}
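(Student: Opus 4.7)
The plan is to exploit the fact that $\mathcal{J}_4$ is one of the three fibrations for which (by the Remark after Proposition~\ref{Prop1}) the base transformation and twist are defined directly over $K=\mathbb{C}(\lambda_1,\lambda_2)$ and in fact the moduli $\lambda_1$ and $\lambda_2$ enter the data of the fibration in a separated way. Concretely, I would first use Proposition~\ref{Prop1} to write the Weierstrass model of $\mathcal{J}_4$ in the form~(\ref{Eq:Weierstrass_b}) obtained from the appropriate extremal rational elliptic surface of Table~\ref{tab:3ExtRatHg} by the base change $t=t_4(u)$ and twist $T=T_4(u)$. Because of the separated form of $t_4$ and $T_4$, the holomorphic two-form $\omega = du\wedge dX/Y$ can be written as an iterated differential $\omega = du \wedge dx/y$ after passing to the corresponding cover of the rational total space, with $u$ playing the role of a coordinate on a double cover of the base isomorphic (up to the Kummer involution) to $\mathcal{E}_2$, and $(x,y)$ parameterizing a fiber isomorphic to $\mathcal{E}_1$ (or vice versa).

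Having set this up, I would choose a basis of transcendental two-cycles of the form $\Sigma^{(1)}\times \Sigma^{(2)}$, where $\Sigma^{(i)}$ is a one-cycle on $\mathcal{E}_i$, pulled back through the Kummer quotient. The period of $\omega$ on such a product cycle is the iterated integral
\begin{equation*}
\int_{\Sigma^{(2)}}\!\!\int_{\Sigma^{(1)}} \frac{dx}{y}\, du \;.
\end{equation*}
The inner fiber integral is a period of the twisted rational elliptic surface at the base point $u$, which by Lemma~\ref{Lem1} (after accounting for the quadratic twist $T_4(u)$) produces a multiple of $\hpg21{\frac12,\frac12}{1}{\lambda_1}$ that is independent of $u$. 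The outer integral against $du$ then becomes, after a change of variable dictated by $t_4(u)$, the standard Euler integral~(\ref{GaussIntegral}) presenting $\hpg21{\frac12,\frac12}{1}{\lambda_2}$. Thus every such product period factors as a product $\omega^{(1)}_1(\lambda_1)\cdot \omega^{(2)}_1(\lambda_2)$, and the four independent choices of $(\Sigma^{(1)},\Sigma^{(2)})$ yield four linearly independent periods of factored form.

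To conclude, I would note that the Pfaffian system~(\ref{PfaffianSystemSqr}) is, by construction, the unique rank-four Fuchsian system in $(\lambda_1,\lambda_2)$ annihilating every product $f(\lambda_1)g(\lambda_2)$ with $f,g$ solutions of $\hpgd21{\frac12,\frac12}{1}{\cdot}$: indeed the Leibniz rule applied to $\vec f_{\lambda_1}\boxtimes \vec f_{\lambda_2}$ gives the connection $\Omega^{(\,_2F_1)}_{\lambda_1}\boxtimes \mathbb{I} + \mathbb{I}\boxtimes \Omega^{(\,_2F_1)}_{\lambda_2}$. Since the four period vectors constructed above form a fundamental system of solutions of the Picard-Fuchs system of $\mathcal{Y}_{\lambda_1,\lambda_2}$ (the rank is four by the transcendental lattice count recalled in Section~\ref{Introduction}), the two rank-four systems coincide.

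The main obstacle I expect is the bookkeeping in matching the twist factor $T_4(u)^{1/2}$ and the Jacobian of the base change $t_4(u)$ with the measure $du$ so that the outer integral really is the Euler integral for $\hpgo21$ at argument $\lambda_2$, rather than at some birationally equivalent rational expression in $\lambda_2$. This requires a careful verification that, for the particular pair $(t_4,T_4)$ in Table~\ref{tab:KummerFibs}, the coordinate change on the base identifies $u$ with an affine Legendre coordinate on $\mathcal{E}_2$ and the twist factor cancels against the Jacobian to produce the measure $dx/\sqrt{x(1-x)(1-\lambda_2 x)}$. Everything else is routine.
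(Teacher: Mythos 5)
Your argument is essentially the paper's own proof: reduce the period over a product transcendental two-cycle to an iterated integral using the decomposition of $\mathcal{J}_4$ from $\mathcal{X}_{11}(\lambda_2)$ given in Proposition~\ref{Prop1} and Table~\ref{tab:KummerFibs}, evaluate the inner (fiber) period by Lemma~\ref{Lem1}, identify the outer integral with the Euler integral~(\ref{GaussIntegral}) producing the second $\hpgo21$ factor, and conclude by the rank-four count forced by Picard rank $18$ for non-isogenous curves. The only bookkeeping correction is that, since $t_4=u$, there is no base Jacobian to cancel: the fiber period is not constant in $u$ but carries the factor ${}_1F_0\!\left(\left.\tfrac{1}{2}\right|u\right)=(1-u)^{-1/2}$ from the $\mu=0$ case of Lemma~\ref{Lem1}, and it is this factor combined with $T_4(u)^{-1/2}$ that produces the Legendre measure whose integral over the branch cut gives $\hpg21{\frac12,\frac12}{1}{\lambda_1}$ (or $\lambda_2$, depending on which projection you use), which is exactly the verification you flag at the end.
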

\begin{proof}
For the Jacobian elliptic fibration $\mathcal{J}_4$ on $\operatorname{Kum}(\mathcal{E}_1\times \mathcal{E}_2)$
the holomorphic two-form is given by $\omega=du \wedge dX/Y$. There is a transcendental two-cycle $\Sigma_2$ such that the period integral 
reduces to the iterated integral
\begin{equation}
\oiint_{ \Sigma_2} \omega =  2 \,  \int_1^\infty \dfrac{dt_4}{\sqrt{t_4 \, (t_4-\lambda_1)}}  \, \oint_{\Sigma_1} \frac{dx}{y} \;.
\end{equation}
where we used Proposition~\ref{Prop1} to relate the double integral to an integral for the holomorphic one-form $dx/y$ on the extremal
rational elliptic surface $\mathcal{X}_{11}(\lambda_2)$ and then reduced the outer integration to an integration along the branch cut for the function
$\sqrt{\, t_4 \, (t_4-\lambda_1)}$. Using Lemma~\ref{Lem1} and Equation~(\ref{GaussIntegral}), we evaluate the period integral further to obtain
\begin{equation}
\begin{split}
\oiint_{ \Sigma_2} \omega &=  4 \pi i \int_1^\infty \dfrac{dt_4}{\sqrt{\, t_4 \, (t_4-\lambda_1)}} \;\hpgo10\!\left.\left(\frac{1}{2} \right| t_4 \right) \; \hpg21{ \frac{1}{2}, \frac{1}{2}}{1}{\lambda_2}  \\
& = 4 \pi^2 \;\, \hpg21{ \frac{1}{2}, \frac{1}{2}}{1}{\lambda_1}  \; \hpg21{ \frac{1}{2}, \frac{1}{2}}{1}{\lambda_2} \;.
\end{split}
\end{equation}
We can change the two-cycle $\Sigma_2$ to obtain a second, linearly-independent solution for each of the factors $\hpgdo21(\lambda_1)$ and $\hpgdo21(\lambda_2)$, respectively.
This proves that there are at least four linearly independent period integrals of the holomorphic two-form $\omega$ that are annihilated by the differential system in~(\ref{PF_J4}).
As the Picard rank of $\operatorname{Kum}(\mathcal{E}_1\times \mathcal{E}_2)$ is 18 if the the two elliptic curves are not mutually isogenous, the rank of the Picard-Fuchs system equals four, and the lemma follows.
\end{proof}
Next, we look at the fibration $\mathcal{J}_7$. Here, we will need to consider a quadratic field extension of the field $K=\mathbb{C}(\lambda_1,\lambda_2)$ of moduli for the pair 
$\mathcal{E}_1$ and $\mathcal{E}_2$. We have the following lemma:
\begin{lemma}
\label{lem:J7}
Over $K[d_7]$ with $d_7^2=\lambda_1\lambda_2$, the Picard-Fuchs system for the periods of the holomorphic two-form on the family 
$\operatorname{Kum}(\mathcal{E}_1\times \mathcal{E}_2)$ is given by
\begin{equation}
\label{PF_J7}
 \dfrac{1}{\sqrt{\lambda_1+\lambda_2+2 \, d_7}} \; \appd2{\frac{1}{2};\;\frac{1}{2},\frac{1}{2}}{1,1}{v_7,w_7} \;,
\end{equation}
where we have set
\begin{equation}
\label{transfo:PF_J7}
 \Big(v_7, w_7\Big) = \left( \dfrac{4 \, d_7}{\lambda_1+\lambda_2+2 \, d_7},  -\dfrac{(1-\lambda_1)(1-\lambda_2)}{\lambda_1+\lambda_2+2 \, d_7} \right) \;.
\end{equation}
Equivalently, the Picard-Fuchs system is given by
\begin{equation}
\label{PF_J7b}
 \dfrac{1}{\sqrt{1+\lambda_1\lambda_2+2 \, d_7}} \;  \appd2{\frac{1}{2};\;\frac{1}{2},\frac{1}{2}}{1,1}{\tilde{v}_7, \tilde{w}_7} \;,
\end{equation}
where we have set 
\begin{equation}
\label{transfo:PF_J7b}
 \Big(\tilde{v}_7, \tilde{w}_7\Big) = \left( \dfrac{4 \, d_7}{1+\lambda_1\lambda_2+2 \, d_7},  \dfrac{(1-\lambda_1)(1-\lambda_2)}{1+\lambda_1\lambda_2+2 \, d_7} \right) \;.
\end{equation}
\end{lemma}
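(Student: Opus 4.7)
The plan is to follow the strategy of Lemma \ref{lem:J4}, but now the outer integral is genuinely coupled to the inner one, so the product structure is replaced by an Appell integral. By Proposition \ref{Prop1}, the fibration $\mathcal{J}_7$ is obtained from one of the extremal rational elliptic surfaces in Table \ref{tab:3ExtRatHg} via the rational base transformation $t=t_7(u)$ and quadratic twist $T=T_7(u)$, both defined over $K[d_7]$ with $d_7^2=\lambda_1\lambda_2$; the holomorphic two-form is $\omega = du\wedge dX/Y$.

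First, I would choose a transcendental two-cycle $\Sigma_2$ constructed as an $S^1$-bundle over a branch cut $\gamma$ in the $u$-plane, with fibers given by a smooth family of one-cycles $\Sigma_1(u)$ on the elliptic fibers. The quadratic twist rescales the fiber period by $1/\sqrt{T_7(u)}$, so Lemma \ref{Lem1} yields
\[
\oint_{\Sigma_1(u)} \frac{dX}{Y} \;=\; \frac{2\pi i}{\sqrt{T_7(u)}}\;\hpg21{\mu,1-\mu}{1}{t_7(u)},
\]
with $\mu=\tfrac12$ dictated by the extremal rational elliptic surface selected for $\mathcal{J}_7$ in Table \ref{tab:KummerFibs}. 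Substituting the Euler integral (\ref{GaussIntegral}) for $\hpgo21$ turns the period of $\omega$ along $\Sigma_2$ into an iterated double integral of an algebraic function of $(u,x)$ over $\gamma\times[0,1]$.

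Second, I would perform a Möbius transformation in $u$ that sends the three relevant base points (the two roots of the quadratic polynomial $T_7(u)$ together with the reference point coming from $t_7$) to $\{0,1,\infty\}$; this brings the outer integration interval to $[0,1]$. The Jacobian of this substitution produces the overall prefactor $1/\sqrt{\lambda_1+\lambda_2+2d_7}$, while the denominator of the integrand takes the standard shape $(1-v_7\,x-w_7\,t)^{1/2}$ with $(v_7,w_7)$ exactly as in (\ref{transfo:PF_J7}). The result is the Euler representation (\ref{IntegralFormula}) of $\app2{1/2;1/2,1/2}{1,1}{v_7,w_7}$. Varying the one-cycle and the branch cut produces four linearly independent periods, which, by the rank-four holonomicity of $\appdo2$ and the Picard-rank argument used in Lemma \ref{lem:J4}, exhausts the Picard-Fuchs system (\ref{PF_J7}). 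The equivalent form (\ref{PF_J7b}) then follows from the linear transformation law for $\appdo2$ sending the singular line $z_1+z_2=1$ to $z_1=\infty$ in (\ref{app2sing}); this substitution takes $(v_7,w_7)\mapsto(\tilde v_7,\tilde w_7)$ and rescales the normalization $1/\sqrt{\lambda_1+\lambda_2+2d_7}$ into $1/\sqrt{1+\lambda_1\lambda_2+2d_7}$.

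The main obstacle is the bookkeeping of the Möbius transformation in $u$: one must check that the explicit polynomials $t_7(u)$ and $T_7(u)$ from Table \ref{tab:KummerFibs}, after the change of variables, line up with the configuration of singularities $\{0,1,\infty\}\cup\{1-v_7 x-w_7 t=0\}$ of the Appell Euler kernel, and that the surviving Jacobian is precisely $1/\sqrt{\lambda_1+\lambda_2+2d_7}$. Because this depends on the square root $d_7=\sqrt{\lambda_1\lambda_2}$, the computation lives naturally over the quadratic extension $K[d_7]$, which is also the reason the symmetry $\lambda_1\lambda_2\leftrightarrow 1$ of $F_2$ under the second identification (\ref{transfo:PF_J7b}) is not visible over $K$ itself.
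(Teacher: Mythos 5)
Your overall strategy (reduce the period to an iterated integral via Proposition~\ref{Prop1}, insert Euler-type integral representations, and match against the Euler integral~(\ref{IntegralFormula}) for $\appo2$) is the right general shape, but the step that carries the entire content of the lemma is asserted rather than established, and as you describe it it does not go through. You take the inner period from Lemma~\ref{Lem1} in the form $\tfrac{2\pi i}{\sqrt{T_7(u)}}\hpg21{\frac12,\frac12}{1}{t_7(u)}$ and then insert the Euler integral~(\ref{GaussIntegral}), which produces the kernel $(1-t_7(u)\,x)^{-1/2}$. Since $t_7(u)$ is a nonconstant M\"obius function of $u$ (Table~\ref{tab:KummerFibs}), clearing denominators leaves a factor that is genuinely bilinear in $(u,x)$, i.e.\ the moving branch point in $u$ is a non-affine M\"obius function of $x$, namely $u_Q(x)=\bigl(4d_7-x(\sqrt{\lambda_1}+\sqrt{\lambda_2})^2\bigr)/\bigl(4d_7-x(1+d_7)^2\bigr)$. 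The $F_2$ kernel $(1-z_1x-z_2t)^{-1/2}$ has its moving branch point \emph{affine} in $x$, and a M\"obius change of the outer variable alone cannot repair this: to make $M(u_Q(x))$ affine in $x$ the map $M$ would have to send $u_Q(\infty)$ to $\infty$, while to preserve the $t^{\beta_2-1}(1-t)^{\gamma_2-\beta_2-1}$ structure it must already send one of $\{0,1,\infty\}$ to $\infty$; these requirements are incompatible for generic $(\lambda_1,\lambda_2)$. So the claim that after your M\"obius substitution ``the denominator takes the standard shape $(1-v_7x-w_7t)^{1/2}$ with $(v_7,w_7)$ exactly as in~(\ref{transfo:PF_J7})'', with Jacobian $1/\sqrt{\lambda_1+\lambda_2+2d_7}$, is precisely what needs proof and, in this form, fails.

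The paper avoids this by making a different choice at the inner step: it uses the \emph{dual} period of Remark~\ref{Rem:dual_period}, $\omega_1'=\tfrac{2\pi i}{\sqrt{t}}\hpg21{\frac12,\frac12}{1}{\frac1t}$, substitutes $t=t_7(u)$ so that the outer integral becomes $\int_{A_7}^{B_7}dt_7/\sqrt{d_7(t_7-A_7)(t_7-B_7)}$ with $(A_7,B_7)$ as in~(\ref{params:J7}) (note $t_7(0)=A_7$, $t_7(\infty)=B_7$), and then applies the integral transform of Lemma~\ref{EulerIntegralTransform} with $\alpha=\beta_1=\beta_2=\tfrac12$, $\gamma_1=\gamma_2=1$. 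The point is that $t^{-\alpha}\hpgo21(\alpha,\beta_1;\gamma_1;1/t)$ has Euler kernel proportional to $(t-x)^{-\alpha}$, which is affine in both variables with no cross term, so the double integral is literally of $F_2$ type; this yields $F_2(1/A_7,\,1-B_7/A_7)$ with prefactor $1/\sqrt{4d_7A_7}=1/\sqrt{\lambda_1+\lambda_2+2d_7}$, i.e.\ exactly~(\ref{transfo:PF_J7}). If you want to keep your choice of inner period, you must redo this matching honestly (it amounts to reproving Lemma~\ref{EulerIntegralTransform} in disguise and requires transforming the inner variable as well, not only $u$). Finally, for~(\ref{PF_J7b}) the paper simply swaps $A_7\leftrightarrow B_7$ in the same computation; your appeal to the linear transformation law of $\appo2$ is not wrong (the law is classical), but note that in the paper's logic that law is \emph{derived} afterwards by comparing~(\ref{PF_J7}) with~(\ref{PF_J7b}), so your argument inverts the order of deduction.
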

\begin{proof}
Using the Jacobian elliptic fibration $\mathcal{J}_7$
and the holomorphic two-form $\omega=du \wedge dX/Y$, there is a transcendental two-cycle $\Sigma'_2$ such that the period integral 
reduces to the iterated integral
\begin{equation}
\oiint_{ \Sigma'_2} \omega =   2  \int_0^\infty \dfrac{du}{\sqrt{\,T_7(u)}}  \;  \oint_{\Sigma'_1} \frac{dx}{y} \;,
\end{equation}
where we used Proposition~\ref{Prop1} to relate the double integral to an integral for the holomorphic one-form $dx/y$ on the extremal
rational elliptic surface $\mathcal{X}_{411}$ and then reduced the outer integration to an integration along a branch cut.
Using Remark~\ref{Rem:dual_period} and Equation~(\ref{IntegralTransform}), we evaluate the period integral further to obtain
\begin{equation}
\begin{split}
\oiint_{ \Sigma'_2} \omega &=  - 2 \pi i  \int_{A_7}^{B_7} \dfrac{dt_7}{\sqrt{\, d_7 \; (t_7-A_7) \, (t_7-B_7)}} \; \dfrac{1}{\sqrt{t_7}} \; \hpg21{ \frac{1}{2}, \frac{1}{2}}{1}{\frac{1}{t_7}}  \\
& = \frac{4 \pi^2}{\sqrt{\, 4 \, d_7  A_7}} \; \app2{\frac{1}{2};\;\frac{1}{2},\frac{1}{2}}{1,1}{\frac{1}{A_7},  1 - \frac{B_7}{A_7}} \;,
\end{split}
\end{equation}
where we have set
\begin{equation}
\label{params:J7}
 \Big(A_7, B_7 \Big) = \left( \dfrac{\lambda_1+\lambda_2}{4 \, d_7}+\dfrac{1}{2}, \, \dfrac{1+\lambda_1 \lambda_2}{4 \, d_7}+\dfrac{1}{2} \right)\;.
\end{equation}
We can change the two-cycle $\Sigma'_2$ to obtain three more linearly independent solutions with different characteristic behavior at the lines in~(\ref{app2sing}).
The rest of the proof is analogous to the proof of Lemma~\ref{lem:J4}.
Equation~(\ref{PF_J7b}) and  Equation~(\ref{transfo:PF_J7b}) follow from swapping the roles of $A_7$ and $B_7$ in Equation~(\ref{params:J7}).
\end{proof}
The comparison of Lemma~\ref{lem:J4} and Lemma~\ref{lem:J7} proves that
the Appell hypergeometric system can be decomposed as an outer tensor product of two rank-two Fuchsian systems.
We have the following corollary:
\begin{corollary}
\label{LemmaTensorSystem}
We have the following equivalence of systems of linear differential equations in two variables holonomic of rank four:
\begin{equation}
\label{id:J4J7}
 \hpgd21{\frac{1}{2},\,\frac{1}{2}}{1}{\lambda_1} \boxtimes  \hpgd21{\frac{1}{2},\,\frac{1}{2}}{1}{\lambda_2} 
 = \dfrac{1}{\sqrt{\lambda_1+\lambda_2+2 \, d_7}} \; \appd2{\frac{1}{2};\;\frac{1}{2},\frac{1}{2}}{1,1}{v_7,w_7} \;.
\end{equation}
In particular, there is a gauge transformation $G=(G_{ij})_{i,j=1}^4$ with 
$$G_{11} = \sqrt{\lambda_1+\lambda_2+2 \, d_7}, \quad G_{1j}=0 \; \text{for $j=2,3,4$},$$
such that the connection forms satisfy
\begin{equation}
\label{RelationConnectionMatrices}
   \Omega^{(\,_2F_1 \boxtimes \,_2F_1)}_{\lambda_1,\lambda_2} = G^{-1} \cdot  \Omega^{(F_2)}_{v_7,w_7} \cdot G + G^{-1}\cdot dG \;.
\end{equation}
\end{corollary}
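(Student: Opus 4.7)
The plan is to deduce the equivalence directly from Lemma~\ref{lem:J4} and Lemma~\ref{lem:J7}, both already established, and then to build the gauge transformation explicitly from the period computation.

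First, I would observe that both sides of~(\ref{id:J4J7}) are holonomic systems of rank four in the two variables $(\lambda_1,\lambda_2)$. Lemma~\ref{lem:J4} identifies the tensor product on the left-hand side as the Picard-Fuchs system for the family of periods of the holomorphic two-form $\omega$ on $\operatorname{Kum}(\mathcal{E}_1\times\mathcal{E}_2)$, obtained from the fibration $\mathcal{J}_4$. Lemma~\ref{lem:J7} identifies the same Picard-Fuchs system, computed instead from the fibration $\mathcal{J}_7$, with the prefactored Appell system on the right-hand side, once the change of variables $(\lambda_1,\lambda_2)\mapsto (v_7,w_7)$ from~(\ref{transfo:PF_J7}) is applied. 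Since the Kummer surface has four linearly independent transcendental periods (as noted after Lemma~\ref{lem:J4}), both systems have the same four-dimensional solution space, and the two proofs already exhibit that by varying the two-cycles $\Sigma_2$ and $\Sigma'_2$. Two holonomic systems of the same rank with the same solution space are equivalent, which establishes the identity~(\ref{id:J4J7}).

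Next, I would extract the gauge transformation. The first components of the solution vectors $\vec{H}_{\lambda_1,\lambda_2}$ and $\vec{F}_{v_7,w_7}$ are respectively $f(\lambda_1)\,f(\lambda_2)$ and $F(v_7,w_7)$, and the period computations in the two lemmas show that under the relation $(v_7,w_7)=(4d_7/(\lambda_1+\lambda_2+2d_7),\,-(1-\lambda_1)(1-\lambda_2)/(\lambda_1+\lambda_2+2d_7))$ one has
\begin{equation*}
 f(\lambda_1)\,f(\lambda_2) \;=\; \sqrt{\lambda_1+\lambda_2+2\,d_7}\;\, F(v_7,w_7),
\end{equation*}
up to a common constant that drops out of the Pfaffian system. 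This forces $G_{11}=\sqrt{\lambda_1+\lambda_2+2\,d_7}$ and $G_{1j}=0$ for $j=2,3,4$, as claimed. The remaining three rows of $G$ are then determined by applying $\theta_{\lambda_1}$, $\theta_{\lambda_2}$ and $\theta_{\lambda_1}\theta_{\lambda_2}$ to the first-row identity and using the chain rule
\begin{equation*}
 \theta_{\lambda_i} = \frac{\lambda_i}{v_7}\,\frac{\partial v_7}{\partial\lambda_i}\,\theta_{v_7} + \frac{\lambda_i}{w_7}\,\frac{\partial w_7}{\partial\lambda_i}\,\theta_{w_7},
\end{equation*}
together with a contribution coming from differentiating the prefactor $\sqrt{\lambda_1+\lambda_2+2d_7}$. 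Each row that results is a $K[d_7]$-linear combination of $F,\theta_{v_7}F,\theta_{w_7}F,\theta_{v_7}\theta_{w_7}F$, giving the matrix $G$ entry by entry.

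Finally, Equation~(\ref{RelationConnectionMatrices}) is the standard transformation law for Pfaffian connections under a change of frame $\vec{H}=G\vec{F}$: substituting into $d\vec{H}=\Omega^{(\,_2F_1\boxtimes\,_2F_1)}_{\lambda_1,\lambda_2}\vec{H}$ and $d\vec{F}=\Omega^{(F_2)}_{v_7,w_7}\vec{F}$ and eliminating $\vec{F}$ yields precisely $\Omega^{(\,_2F_1\boxtimes\,_2F_1)}_{\lambda_1,\lambda_2}=G^{-1}\Omega^{(F_2)}_{v_7,w_7}G+G^{-1}dG$. The only point requiring genuine care, and the main technical obstacle, is verifying that the matrix $G$ so constructed is invertible at a generic point of moduli; this is where one uses that $\mathcal{E}_1$ and $\mathcal{E}_2$ are non-isogenous, so that the solution space is genuinely four-dimensional and the first-row period identity propagates to a full equivalence of frames rather than degenerating.
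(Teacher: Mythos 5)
The first half of your argument is exactly the paper's route: Equation~(\ref{id:J4J7}) is obtained by comparing Lemma~\ref{lem:J4} and Lemma~\ref{lem:J7}, both of which identify their respective systems with the rank-four Picard--Fuchs system of the family, the rank being four precisely because $\mathcal{E}_1$ and $\mathcal{E}_2$ are non-isogenous. That part is fine and matches the text preceding the corollary.

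The construction of the gauge transformation is where your proposal has a genuine gap. You assert that ``the period computations in the two lemmas show'' the solution-level identity $f(\lambda_1)\,f(\lambda_2)=\sqrt{\lambda_1+\lambda_2+2\,d_7}\;F(v_7,w_7)$ and then build all of $G$ by differentiating it. The two lemmas do not show this: Lemma~\ref{lem:J4} computes the period over a cycle $\Sigma_2$, while Lemma~\ref{lem:J7} uses a different cycle $\Sigma_2'$ whose inner period is the dual solution of Remark~\ref{Rem:dual_period}, so what follows from the comparison is only that the two families of functions span the same four-dimensional period space --- not that any particular product of $\hpgo21$'s equals any particular value of $\appo2$. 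Identifying individual solutions is exactly the stronger Barnes--Bailey-type identity that the paper deliberately separates out in Remark~\ref{RefCDM} as requiring a carefully crafted two-cycle, and which (like the explicit matrix $G$ itself) it does not prove here but cites from \cite{Clingher:2015aa}; the paper's own proof of the second statement is precisely that citation. A symptom of the unjustified identification is that your identity even has the prefactor on the wrong side: if the two cycles were the same, equating the computed periods would give $f(\lambda_1)\,f(\lambda_2)=\tfrac{1}{\sqrt{\lambda_1+\lambda_2+2\,d_7}}\,\appo2(v_7,w_7)$, and no ``common constant'' can absorb a non-constant factor. Note that the existence of a $G$ with the prescribed first row can be rescued from what you did establish: equality of the two scalar solution spaces means the fundamental solution matrices agree up to a constant invertible matrix after multiplying by the prefactor, and conjugating by that constant matrix produces a gauge matrix whose first row is $(\sqrt{\lambda_1+\lambda_2+2\,d_7},0,0,0)$ without ever matching specific cycles; but as written, your derivation of the first row --- and hence of the remaining rows by applying $\theta_{\lambda_1}$, $\theta_{\lambda_2}$ --- rests on an identity you have not proved.
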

\begin{proof}
The second statement is a special case of a more general computation that was carried out in \cite{Clingher:2015aa} where the explicit form of the gauge transformation can be found as well.
\end{proof}
\begin{remark}
\label{RefCDM}
If a carefully crafted transcendental two-cycles is chosen for the period integral, one can relate not only the two differential systems, 
but also explicit solutions to both systems. One obtains a special case of an identity by Barnes and Bailey relating
Appell's hypergeometric function to a product of Gauss' hypergeometric functions. This stronger identity was proved in \cite{Clingher:2015aa}
using period integrals on superelliptic curves and generalized Kummer varieties for general rational parameters $(\alpha,\beta_1, \beta_2, \gamma_1, \gamma_2)$.
\end{remark}

\begin{corollary}
We have the following equivalence of systems of linear differential equations in two variables holonomic of rank four:
\begin{equation}
\label{id:J6J7}
 \appd2{\frac{1}{2};\;\frac{1}{2},\frac{1}{2}}{1,1}{w_7, v_7} = \dfrac{1}{\sqrt{1-w_7}} \;  \appd2{\frac{1}{2};\;\frac{1}{2},\frac{1}{2}}{1,1}{\frac{w_7}{w_7-1}, \frac{v_7}{1-w_7}}  \;.
\end{equation}
\end{corollary}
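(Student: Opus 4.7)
The plan is to derive this identity by the same two-fibration method that established Corollary~\ref{LemmaTensorSystem}. Since the period of the holomorphic two-form on $\operatorname{Kum}(\mathcal{E}_1\times\mathcal{E}_2)$ is intrinsic, any two Jacobian elliptic fibrations on the Kummer surface must yield equivalent rank-four Picard-Fuchs systems. The natural candidate for the second fibration, suggested by the label of~(\ref{id:J6J7}), is $\mathcal{J}_6$.

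First I would work out the Picard-Fuchs system coming from $\mathcal{J}_6$. Using Proposition~\ref{Prop1} and the row for $\mathcal{J}_6$ in Table~\ref{tab:KummerFibs}, the period of $\omega = du \wedge dX/Y$ against a suitable transcendental two-cycle reduces to an iterated integral of exactly the type treated in Lemma~\ref{lem:J7}, except that the outer quadratic polynomial $T_6(u)$ has different roots than $T_7(u)$. Applying Remark~\ref{Rem:dual_period} to evaluate the inner integral and Lemma~\ref{EulerIntegralTransform} (the Euler integral transform) to the outer one, the double integral evaluates to an Appell $\appo2$-function whose arguments, after simplification using $d_7^2 = \lambda_1\lambda_2$, come out to $(w_7,v_7)$ in the notation of Lemma~\ref{lem:J7}. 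This would identify the Picard-Fuchs system of $\mathcal{J}_6$ with the left-hand side of~(\ref{id:J6J7}).

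Second, to match this against the system of Lemma~\ref{lem:J7} and thereby read off the variable change $(w_7,v_7) \mapsto (w_7/(w_7-1),\,v_7/(1-w_7))$, I would invoke the classical linear transformation law for Appell's $F_2$,
\[
 \app2{\alpha;\,\beta_1,\beta_2}{\gamma_1,\gamma_2}{z_1,z_2} = (1-z_1)^{-\alpha}\;\app2{\alpha;\,\gamma_1-\beta_1,\beta_2}{\gamma_1,\gamma_2}{\tfrac{z_1}{z_1-1},\,\tfrac{z_2}{1-z_1}},
\]
which follows immediately from the substitution $x \mapsto 1-x$ in the Euler integral representation~(\ref{IntegralFormula}). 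Specializing to $\alpha=\beta_1=\beta_2=\tfrac{1}{2}$ and $\gamma_1=\gamma_2=1$ yields $\gamma_1-\beta_1 = \beta_1$, so the transformation closes on itself and produces exactly the prefactor $1/\sqrt{1-w_7}$ together with the arguments appearing on the right-hand side of~(\ref{id:J6J7}).

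The main obstacle will be promoting this solution-level identity to an equivalence of the full Pfaffian systems, which is the actual content of the corollary. As in Corollary~\ref{LemmaTensorSystem}, this requires an explicit gauge transformation $G$ whose $(1,1)$-entry is $\sqrt{1-w_7}$ and which satisfies the analogue of~(\ref{RelationConnectionMatrices}) for the two connection forms $\Omega^{(F_2)}$ evaluated at $(w_7,v_7)$ and at $(w_7/(w_7-1),\,v_7/(1-w_7))$. Verifying this amounts to a direct but tedious pullback computation using the explicit form of $\Omega^{(F_2)}$ from~\cite{MR1086776}; since all four linearly independent period branches for $\mathcal{J}_6$ are realized by varying the transcendental two-cycle, rank considerations then force $G$ to intertwine the full holonomic systems.
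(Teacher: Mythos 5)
Your proposal is correct, but it proves the corollary by a genuinely different mechanism than the paper. The engine of your argument is the classical linear transformation of Appell's $\appo2$, obtained from the substitution $x \mapsto 1-x$ in the Euler double integral~(\ref{IntegralFormula}) and specialized to $\alpha=\beta_1=\beta_2=\tfrac{1}{2}$, $\gamma_1=\gamma_2=1$ (so that $\gamma_1-\beta_1=\beta_1$ and the transformation closes), together with the pullback/gauge computation -- which you flag but do not carry out -- that lifts the solution-level identity to the Pfaffian systems $\appdo2$. The paper never invokes this classical law: it compares the two representations~(\ref{PF_J7}) and~(\ref{PF_J7b}) of the same rank-four Picard--Fuchs system, which arise from swapping $A_7 \leftrightarrow B_7$ inside the $\mathcal{J}_7$ computation of Lemma~\ref{lem:J7}; since $1-w_7=\tfrac{1+\lambda_1\lambda_2+2d_7}{\lambda_1+\lambda_2+2d_7}$, one checks directly that $\bigl(\tilde{w}_7,\tilde{v}_7\bigr)=\bigl(\tfrac{w_7}{w_7-1},\tfrac{v_7}{1-w_7}\bigr)$ and that the ratio of prefactors is $1/\sqrt{1-w_7}$, which is exactly~(\ref{id:J6J7}). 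Thus in the paper the linear transformation law is an \emph{output} of fibration/representation independence (cf.\ Remark~\ref{RefCDM2}), whereas in your proof it is the input. Note also that your framing overstates the role of $\mathcal{J}_6$: comparing the $\mathcal{J}_6$ system of Lemma~\ref{lem:J6} with~(\ref{PF_J7}) yields only the trivial swap of the two arguments (identical prefactors), so step 1 alone cannot produce the identity; all the nontrivial content in your write-up sits in the quoted transformation law and the gauge verification. What each route buys: yours is analytically self-contained and works verbatim for general parameters; the paper's needs only Lemma~\ref{lem:J7} plus elementary algebra, and it exemplifies the paper's theme that the geometry of inequivalent elliptic fibrations generates such special-function identities rather than presupposing them.
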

\begin{proof}
The proof follows from the identity
\begin{equation}
 \appd2{\frac{1}{2};\;\frac{1}{2},\frac{1}{2}}{1,1}{w_7, v_7} = \sqrt{\dfrac{\lambda_1+\lambda_2+2 \, d_7}{1+\lambda_1\lambda_2+2 \, d_7}} \;  \appd2{\frac{1}{2};\;\frac{1}{2},\frac{1}{2}}{1,1}{\tilde{w}_7, \, \tilde{v}_7}  
\end{equation}
obtained by comparing Equation~(\ref{PF_J7}) and Equation~(\ref{PF_J7b}) after working out the linear relation between the variables on the left and right hand side.
\end{proof}

\begin{remark}
\label{RefCDM2}
Equation~(\ref{id:J6J7}) can be extended to a relation not only between differential systems, but between explicit solutions. 
Equation~(\ref{id:J6J7}) is then the linear transformation for Appell's hypergeometric function $\appo2$.
This stronger identity was proved in \cite{Clingher:2015aa} for general rational parameters $(\alpha,\beta_1, \beta_2, \gamma_1, \gamma_2)$.
\end{remark}

Next, we look at the fibration $\mathcal{J}_6$. However, this fibration will not provide us with a new characterization of the Picard-Fuchs system. We have the following lemma:
\begin{lemma}
\label{lem:J6}
Over $K[d_6]$ with $d_6^2=\lambda_1\lambda_2$, the Picard-Fuchs system for the periods of the holomorphic two-form on the family 
$\operatorname{Kum}(\mathcal{E}_1\times \mathcal{E}_2)$ is given by
\begin{equation}
\label{PF_J6}
 \dfrac{1}{\sqrt{\lambda_1+\lambda_2+ 2 \, d_6}} \;  \appd2{\frac{1}{2};\;\frac{1}{2},\frac{1}{2}}{1,1}{v_6, w_6} \;,
\end{equation}
where we have set
\begin{equation}
 \Big(v_6, w_6\Big) =  \left( -\dfrac{(1-\lambda_1)(1-\lambda_2)}{\lambda_1+\lambda_2+ 2 \, d_6},   \dfrac{4 \,d_6}{\lambda_1+\lambda_2+ 2 \, d_6} \right) \;.
\end{equation}
In particular, Equation~(\ref{PF_J6}) coincides with Equation~(\ref{PF_J7}) up to swapping the order of variables.
\end{lemma}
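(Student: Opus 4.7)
The plan is to follow the same template used in the proof of Lemma~\ref{lem:J7}, adapted to the fibration $\mathcal{J}_6$. First, I would invoke Proposition~\ref{Prop1} to express $\mathcal{J}_6$ as a quadratic twist and rational base change of the appropriate extremal rational elliptic surface from Table~\ref{tab:3ExtRatHg}, with base transformation $t_6(u)$ and twist $T_6(u)$ read off from Table~\ref{tab:KummerFibs}. By the remark following Proposition~\ref{Prop1}, these data are defined over $K$ itself; the quadratic field extension $K[d_6]$ with $d_6^2 = \lambda_1\lambda_2$ enters only when one factors the quadratic in $u$ whose roots are the endpoints of the outer integration.

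Second, with $\omega = du \wedge dX/Y$ the holomorphic two-form on the total space, I would choose a transcendental two-cycle $\Sigma_2^{(6)}$ that fibers over a branch cut of $\sqrt{T_6(u)}$ between the two roots, with the generic fiber a one-cycle on the elliptic fiber of the underlying rational surface. The period then factors as
\begin{equation*}
\oiint_{\Sigma_2^{(6)}} \omega \;=\; 2 \int_{A_6}^{B_6} \frac{du}{\sqrt{T_6(u)}} \, \oint_{\Sigma_1'} \frac{dx}{y},
\end{equation*}
where the inner integral is evaluated via Remark~\ref{Rem:dual_period} to yield the dual period of the appropriate extremal rational surface, expressed as a Gauss hypergeometric function with argument $1/t_6(u)$.

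Third, I would recognize the outer integration as a Euler transform of the type in Lemma~\ref{EulerIntegralTransform}. Matching the endpoints $A_6, B_6$ and overall prefactor with Equation~(\ref{IntegralTransform}) and using that $d_6^2 = \lambda_1\lambda_2$, the period collapses to the stated multiple of $\appdo2$ in the arguments $(v_6, w_6)$. The remaining three linearly independent solutions are obtained by deforming $\Sigma_2^{(6)}$ across the singular lines in~(\ref{app2sing}), and the rank count of the Picard-Fuchs system (rank four, since $\operatorname{rank} T(\mathcal{Y}) = 4$) forces this to be the full system, exactly as in Lemma~\ref{lem:J4} and Lemma~\ref{lem:J7}. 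Finally, the last sentence of the lemma is immediate by direct comparison of~(\ref{PF_J6}) with~(\ref{PF_J7}): setting $d_6 = d_7$ gives $(v_6, w_6) = (w_7, v_7)$, so the two Appell systems differ only by interchanging the two variables.

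The main obstacle is the combinatorial bookkeeping needed to confirm that the quadratic polynomial $T_6(u)$ has exactly the right roots $A_6, B_6$ (as algebraic functions of $\lambda_1, \lambda_2, d_6$) to produce, after the change of variables implicit in Lemma~\ref{EulerIntegralTransform}, the parameters $(v_6, w_6)$ in the stated order rather than the $\mathcal{J}_7$ order $(v_7, w_7)$. This is a routine but careful computation once $t_6, T_6$ are extracted from Table~\ref{tab:KummerFibs}, and it explains geometrically why $\mathcal{J}_6$ and $\mathcal{J}_7$ produce the same differential system up to the swap $z_1 \leftrightarrow z_2$.
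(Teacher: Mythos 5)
Your overall template---reduce the period via Proposition~\ref{Prop1} to an iterated integral, evaluate the inner integral by Remark~\ref{Rem:dual_period}, recognize the outer integral as the Euler transform of Lemma~\ref{EulerIntegralTransform}, and finish with the rank-four count and the comparison with Lemma~\ref{lem:J7}---is exactly the paper's. However, there is a concrete misstep in how you set up the outer integration. From Table~\ref{tab:KummerFibs} the twist for $\mathcal{J}_6$ is $T_6(u)=-\tfrac{1}{2}(\lambda_1-1)(\lambda_2-1)\,u^2$, a constant times a perfect square: it has only the double root $u=0$, so there is no pair of distinct roots of $T_6$ and no branch cut of $\sqrt{T_6(u)}$ between them over which your two-cycle could fiber; the proposed integral $\int_{A_6}^{B_6} du/\sqrt{T_6(u)}$ with $A_6,B_6$ ``the roots of $T_6(u)$'' is vacuous. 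The same misattribution recurs in your closing paragraph and in the claim that $K[d_6]$ enters ``when one factors the quadratic in $u$''; note it would not even reproduce the $\mathcal{J}_7$ template, where $T_7=d_7\,u\,(u-1)^2$ and the $u$-integration runs from $0$ to $\infty$, the Appell endpoints only appearing after passing to the base variable.

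What actually produces the two endpoints, and the field extension, is the degree-two base change $t_6(u)$, not the twist. In the paper's proof one works in the base coordinate: since $t=t_6(u)$ is two-to-one in $u$, rewriting the $u$-integration in terms of $t_6$ introduces the square root of the discriminant quadratic $p_2(t)=(1-\lambda_1)^2(\lambda_2-1)^2t^2-2(1-\lambda_1)(\lambda_2-1)(\lambda_1+\lambda_2)\,t+(\lambda_2-\lambda_1)^2$, and the period becomes a constant multiple of $\int_{A_6}^{B_6}\tfrac{dt_6}{\sqrt{p_2(t_6)}}\,t_6^{-1/2}\,\hpg21{\frac{1}{2},\,\frac{1}{2}}{1}{1/t_6}$, where $A_6,B_6=\tfrac{\lambda_1+\lambda_2\pm 2d_6}{(1-\lambda_1)(\lambda_2-1)}$ are the roots of $p_2$, i.e.\ the branch values of the cover $u\mapsto t_6(u)$. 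This is precisely where $d_6=\sqrt{\lambda_1\lambda_2}$ enters, consistent with the Remark after Proposition~\ref{Prop1} that $t_6,T_6$ themselves are defined over $K$. Only in this $t$-variable does Lemma~\ref{EulerIntegralTransform} apply, yielding $\app2{\frac{1}{2};\;\frac{1}{2},\frac{1}{2}}{1,1}{1/A_6,\,1-B_6/A_6}$ and hence the stated $(v_6,w_6)$; the paper also observes that swapping $A_6\leftrightarrow B_6$ merely flips the sign of $d_6$. Your final point that \eqref{PF_J6} is \eqref{PF_J7} with the two variables interchanged is correct and is how the paper concludes. So the repair needed is to locate the branch points in the ramification of $t_6$ (equivalently the roots of $p_2$), after which the rest of your argument goes through as in Lemma~\ref{lem:J7}.
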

\begin{proof}
Using the Jacobian elliptic fibration $\mathcal{J}_6$ on $\operatorname{Kum}(\mathcal{E}_1\times \mathcal{E}_2)$
and the holomorphic two-form $\omega=du \wedge dX/Y$, there is a transcendental two-cycle $\Sigma'_2$ such that the period integral 
reduces to the iterated integral
\begin{equation}
\begin{split}
\oiint_{ \Sigma'_2} \omega &= - 4 \pi i \int_{A_6}^{B_6} \dfrac{\sqrt{(1-\lambda_1)(\lambda_2-1)} \, dt_6}{\sqrt{\, p_2(t)}} \; \dfrac{1}{\sqrt{t_6}} \; \hpg21{ \frac{1}{2}, \frac{1}{2}}{1}{\frac{1}{t_6}}  \;,
\end{split}
\end{equation}
where the polynomial $p_2(t)$ is given by
$$
 p_2(t) = (1-\lambda_1)^2  (\lambda_2-1)^2  \, t^2 - 2 \, (1-\lambda_1) (\lambda_2-1) (\lambda_1+\lambda_2) \, t + (\lambda_2-\lambda_1)^2 \;,
$$
and its two roots $A_6, B_6$ are
\begin{equation}
\label{params:J6}
  \Big(A_6, B_6 \Big) = \left(\dfrac{\lambda_1+\lambda_2 \pm 2 \, d_6}{(1-\lambda_1)(\lambda_2-1)},\dfrac{\lambda_1+\lambda_2 \mp 2 \, d_6}{(1-\lambda_1)(\lambda_2-1)}\right)  \;.
\end{equation}
As in the proof of Lemma~\ref{lem:J7} we obtain
\begin{equation}
\begin{split}
\oiint_{ \Sigma'_2} \omega &=    \frac{4 \pi^2}{\sqrt{\, (1-\lambda_1)(\lambda_2-1) \,  A_6}} \; \app2{\frac{1}{2};\;\frac{1}{2},\frac{1}{2}}{1,1}{\frac{1}{A_6}, 1 - \frac{B_6}{A_6}} \;.
\end{split}
\end{equation}
Notice that swapping the roles of $A_6$ and $B_6$ in the transformation~(\ref{params:J6}) interchanges $\pm d_6$.
The rest of the proof is analogous to the one of Lemma~\ref{lem:J7}.
\end{proof}

\begin{remark}
There is a beautiful geometric reason why the Picard-Fuchs systems for fibrations $\mathcal{J}_6$ and $\mathcal{J}_7$ coincide which generalizes to lower Picard rank as well.
This will be subject of a forthcoming article.
\end{remark}

Next, we look at the fibration $\mathcal{J}_9$.  We have the following lemma:
\begin{lemma}
\label{lem:J9}
Over $K[d_9]$ with $d_9^2= (\lambda_1^2 - \lambda_1 +1) (\lambda_2^2 - \lambda_2 +1)$, the Picard-Fuchs system for the periods of the holomorphic two-form on the family 
$\operatorname{Kum}(\mathcal{E}_1\times \mathcal{E}_2)$ is given by
\begin{equation}
\label{PF_J9}
 \dfrac{1}{\sqrt{R_9 \pm S_9  + 4 \, d_9}} \;  \appd2{\frac{1}{2};\;\frac{1}{6},\frac{1}{2}}{\frac{1}{3},1}{v_9, w_9} \;,
\end{equation}
where we have set 
\begin{equation}
 \Big(v_9, w_9\Big) = \left( \dfrac{8 \,d_9}{R_9 \pm S_9  + 4 \, d_9},  \dfrac{\pm S_9}{R_9 \pm S_9  + 4 \, d_9} \right) \;,
\end{equation}
and
\begin{equation}
\begin{split}
 R_9 & = 27 \, \lambda_1 \lambda_2 (\lambda_1-1) (\lambda_2-1), \\
 S_9 & = (\lambda_1+1)(\lambda_1-2)(2\lambda_1-1) (\lambda_2+1)(\lambda_2-2)(2\lambda_2-1) \;.
\end{split}
\end{equation}
\end{lemma}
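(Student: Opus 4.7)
The plan is to follow exactly the same strategy as in the proof of Lemma~\ref{lem:J7}, with the key changes being the specific extremal rational elliptic surface in Table~\ref{tab:3ExtRatHg} that the fibration $\mathcal{J}_9$ comes from and the higher-degree rational base change $t=t_9(u)$ furnished by Proposition~\ref{Prop1}. First I would use Proposition~\ref{Prop1} to rewrite the Weierstrass equation of $\mathcal{J}_9$ as the twisted base change of an extremal rational elliptic surface from Table~\ref{tab:3ExtRatHg} whose hypergeometric parameter in Lemma~\ref{Lem1} is $\mu=\frac{1}{6}$, this being dictated by the appearance of $\frac{1}{6}$ and $\frac{1}{3}$ among the target Appell parameters. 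With the holomorphic two-form $\omega=du\wedge dX/Y$, I would then choose a transcendental two-cycle $\Sigma_2'$ so that the period reduces to an iterated integral
\begin{equation*}
\oiint_{\Sigma_2'}\omega \;=\; c\,\int \frac{du}{\sqrt{T_9(u)}}\;\oint_{\Sigma_1'}\frac{dx}{y}
\end{equation*}
for a constant $c$, with the inner period evaluated at $t=t_9(u)$ and interpreted via Remark~\ref{Rem:dual_period}.

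Second, I would change the outer variable from $u$ to $t_9$. Because $t_9(u)$ is cubic and $T_9(u)$ is quadratic in $u$, the outer integral becomes an integration along the branch cut of a quadratic polynomial in $t_9$ with two roots $A_9,B_9$. Using a classical cubic transformation of $\hpgo21$ to recast the dual period of Remark~\ref{Rem:dual_period} (which has matched $\alpha=\beta$) into the form required by the Euler transform, the calculation produces
\begin{equation*}
\oiint_{\Sigma_2'}\omega \;=\; c'\int_{A_9}^{B_9} \frac{dt_9}{\sqrt{(t_9-A_9)(t_9-B_9)}}\;\frac{1}{t_9^{1/2}}\;\hpg21{\frac{1}{2},\frac{1}{6}}{\frac{1}{3}}{\frac{1}{t_9}}.
\end{equation*}
Applying Lemma~\ref{EulerIntegralTransform} with parameters $(\alpha;\beta_1,\beta_2;\gamma_1,\gamma_2)=(\frac{1}{2};\frac{1}{6},\frac{1}{2};\frac{1}{3},1)$, this iterated integral evaluates to a constant multiple of
\begin{equation*}
\frac{1}{\sqrt{A_9}}\;\app2{\frac{1}{2};\frac{1}{6},\frac{1}{2}}{\frac{1}{3},1}{\frac{1}{A_9},\,1-\frac{B_9}{A_9}},
\end{equation*}
and it remains to verify by explicit algebra that $\bigl(1/A_9,\;1-B_9/A_9\bigr)=(v_9,w_9)$, that the prefactor matches $1/\sqrt{R_9\pm S_9+4d_9}$, and that the discriminant of the quadratic polynomial whose roots are $A_9,B_9$ reproduces (up to sign) $d_9^2=(\lambda_1^2-\lambda_1+1)(\lambda_2^2-\lambda_2+1)$, with $R_9$ and $S_9$ arising as the symmetric and antisymmetric combinations of its coefficients.

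Finally, by varying $\Sigma_2'$ over a basis of four transcendental two-cycles, I obtain four linearly independent solutions, and invoking the rank-four property of the Picard-Fuchs system established in the introduction concludes the proof exactly as in Lemma~\ref{lem:J7}. The main obstacle is step two: identifying the precise classical cubic transformation of $\hpgo21$ that converts the $\mu=\frac{1}{6}$ dual period of Remark~\ref{Rem:dual_period} into an expression whose integrand over $t_9$ matches the Euler integral transform of Lemma~\ref{EulerIntegralTransform}, and then pushing the resulting polynomial algebra through to identify $A_9, B_9$ and ultimately $v_9,w_9,R_9,S_9,d_9$ in closed form. This is substantially more delicate than the corresponding step in Lemmas~\ref{lem:J7} and~\ref{lem:J6}, where the inner period already had $\alpha=\beta$ and the cubic transformation is not needed; it is this cubic transformation that will give rise to the new identity announced in the introduction.
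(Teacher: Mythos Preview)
Your overall strategy is correct and matches the paper's, but two concrete details are wrong and would derail the computation if you tried to carry it out. First, look again at Table~\ref{tab:KummerFibs}: for $\mathcal{J}_9$ the base change is the M\"obius map $t_9=(B_9u-A_9)/(u-1)$, which is degree \emph{one}, not cubic; and the twist $T_9=-\tfrac{2}{3}d_9\,u(u-1)^2$ is cubic, not quadratic. It is precisely because $t_9(u)$ is M\"obius that the substitution $u\mapsto t_9$ is straightforward and turns $du/\sqrt{T_9(u)}$ into a constant times $dt_9/\sqrt{d_9(t_9-A_9)(t_9-B_9)}$; a genuinely cubic $t_9(u)$ would make this step far more delicate. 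Second, the paper does \emph{not} apply any classical cubic transformation of $\hpgo21$. The dual period from Remark~\ref{Rem:dual_period} already has the form $t^{-\mu}\,\hpg21{\mu,\mu}{2\mu}{1/t}$ with $\mu=\tfrac16$, and Lemma~\ref{EulerIntegralTransform} applies directly with $(\alpha;\beta_1,\beta_2;\gamma_1,\gamma_2)=(\tfrac16;\tfrac16,\tfrac12;\tfrac13,1)$, yielding an $\appo2$ with $\alpha=\tfrac16$ and prefactor $(8d_9^3A_9)^{-1/6}$.

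The $\alpha=\tfrac12$ in the stated form of the lemma is almost certainly a typographical slip: the proof in the paper produces $\alpha=\tfrac16$, and Remark following Lemma~\ref{lem:J9} on the quadric property requires $\alpha=\beta_1+\beta_2-\tfrac12=\tfrac16+\tfrac12-\tfrac12=\tfrac16$. So the extra cubic $\hpgo21$ transformation you propose is unnecessary. Finally, the ``cubic transformation'' advertised in the introduction refers to the degree of the change of moduli $(\lambda_1,\lambda_2)\mapsto(v_9,w_9)$ (note that $S_9$ and the numerators of $A_9,B_9$ are cubic in each $\lambda_i$), not to a cubic identity for a single $\hpgo21$.
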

\begin{proof}
Using the Jacobian elliptic fibration $\mathcal{J}_9$ on $\operatorname{Kum}(\mathcal{E}_1\times \mathcal{E}_2)$
and the holomorphic two-form $\omega=du \wedge dX/Y$, there is a transcendental two-cycle $\Sigma'_2$ 
such that the period integral 
reduces to the following integral:
\begin{equation}
\oiint_{ \Sigma'_2} \omega =  2 \, \int_0^\infty \dfrac{du}{\sqrt{\,T_9(u)}}  \;  \oint_{\Sigma'_1} \frac{dx}{y} \;.
\end{equation}
where we used Proposition~\ref{Prop1} to relate the double integral to an integral for the holomorphic one-form $dx/y$ on the extremal
rational elliptic surface $\mathcal{X}_{211}$ and then reduced the outer integration to an integration along a branch cut.
Using Remark~\ref{Rem:dual_period} we evaluate the period integral further to obtain
\begin{equation}
\begin{split}
\oiint_{ \Sigma'_2} \omega &= - \frac{4\sqrt{3}\pi i}{\sqrt{2}}   \; \int_{A_6}^{B_6} \dfrac{dt_9}{\sqrt{\, d_9 \, (t_9-A_9) (t-B_9)}} \; \dfrac{1}{t_9^{1/6}} \; \hpg21{ \frac{1}{6}, \frac{1}{6}}{\frac{1}{3}}{\frac{1}{t_9}}  \;,
\end{split}
\end{equation}
where $A_9, B_9$ are given by
\begin{equation}
\label{params:J9}
\begin{split}
 A_9 & = \dfrac{(2 \lambda_1 \lambda_2 - \lambda_1 - \lambda_2 +2) (\lambda_1 \lambda_2 + \lambda_1 - 2 \lambda_2 +1) (\lambda_1 \lambda_2 - 2 \lambda_1 + \lambda_2 +1) }{4 \, d^3_9} - \dfrac{1}{2}\;, \\
 B_9 & = \dfrac{(2 \lambda_1 \lambda_2 -\lambda_1 - \lambda_2 -1 )(\lambda_1 \lambda_2 + \lambda_1 + \lambda_2 -2) (\lambda_1 \lambda_2 - 2 \lambda_1 - 2 \lambda_2 +1) }{4 \, d^3_9} -\dfrac{1}{2} \;.
\end{split}
\end{equation}
As in the proof of Lemma~\ref{lem:J7} we obtain
\begin{equation}
\begin{split}
\oiint_{ \Sigma''_2} \omega &=  \frac{4 \sqrt{3} \pi^2}{(8 \, d_9^3 \, A_9)^{1/6}} \; \app2{\frac{1}{6};\;\frac{1}{6},\frac{1}{2}}{\frac{1}{3},1}{\frac{1}{A_9}, 1 - \frac{B_9}{A_9}} \;.
\end{split}
\end{equation}
The rest of the proof is analogous to the one of Lemma~\ref{lem:J7}.
Equation~(\ref{PF_J7b}) and  Equation~(\ref{transfo:PF_J7b}) follow from swapping the roles of $A_9$ and $B_9$ in Equation~(\ref{params:J9}).
\end{proof}

\begin{remark}
All Appell hypergeometric systems considered in Lemmas~\ref{lem:J7},~\ref{lem:J6},~\ref{lem:J9} are systems of linear differential equations in two variables holonomic of rank four.
In addition they all satisfy
\begin{equation}
\label{QuadricProperty}
 \alpha= \beta_1 + \beta_2 - \frac{1}{2}, \; \gamma_1 = 2\beta_1\, \; \gamma_2 = 2\beta_2 \;,
\end{equation}
which implies the so-called \textbf{quadric property} as proved in~\cite{MR960834}. The quadric property for a holonomic differential system
states that linearly independent solutions are quadratically related.
It is obvious that the outer tensor product in Equation~(\ref{PF_J4}) satisfies this quadric property as well.
From a geometric point of view, the quadratic property stems from the existence of a polarization for the variation of Hodge structure defined by the family of Kummer surfaces.
\end{remark}

\subsection{Differential systems from fibrations $\mathcal{J}_8$, $\mathcal{J}_{10}$, $\mathcal{J}_{11}$}
Proposition~\ref{Prop2} proves that the elliptic fibrations $\mathcal{J}_8, \mathcal{J}_{10}, \mathcal{J}_{11}$ will not give rise to additional identities relating
differential systems beyond the results obtained for fibrations $\mathcal{J}_7$, $\mathcal{J}_{9}$.
In fact, the systems derived from fibrations $\mathcal{J}_8, \mathcal{J}_{11}$ and $\mathcal{J}_{10}$ coincide with the ones found in Lemma~\ref{lem:J7}
and Lemma~\ref{lem:J9}, respectively.

\subsection{A particular GKZ system}
\label{sec:GKZ}
For the remaining fibrations a reduction of the Picard-Fuchs system to an Appell hypergeometric system is in general not possible.
Instead, we will give a description of the differential systems by restricting a particular family of GKZ systems.

We start with the two subsets $\mathcal{A}_1, \mathcal{A}_2 \subset \mathbb{Z}^2$ given by
\begin{equation}
 \mathcal{A}_2 = \left\lbrace \left(\begin{array}{c} 0 \\ 1 \end{array} \right), \left(\begin{array}{c} 0 \\ 0 \end{array} \right) \right\rbrace \;,
 \quad
 \mathcal{A}_1 = \mathcal{A}_2 \cup \left\lbrace  \left(\begin{array}{c} 3 \\ 0 \end{array} \right),  \left(\begin{array}{c} 2 \\ 0 \end{array} \right), 
  \left(\begin{array}{c} 1 \\ 0 \end{array} \right),  \left(\begin{array}{r} -1 \\ 0 \end{array} \right) \right\rbrace \;.
\end{equation}
To each element $\mathbf{n} = (n_1, n_2) \in \mathbb{Z}^2$  we associate the Laurent monomial
$x^{\mathbf{n}} = x_1^{n_1} \, x_2^{n_2}$ in the two complex variables $x_1$ and $x_2$. We identify the vector space $\mathbb{C}^{\mathcal{A}_i}$ for $i=1, 2$
with the space of Laurent polynomials of the following form
\begin{equation}
 \begin{split}
   P_1 & =  v_{(1|3,0)} x^3_1 + v_{(1|2,0)} x^2_1 +  v_{(1|1,0)} x_1 + v_{(1|0,0)}  + v_{(1|-1,0)} x_1^{-1} + v_{(1|0,1)} x_2  \,,\\
   P_2 & =  v_{(2|0,1)} x_2 + v_{(2|0,0)} \,, 
 \end{split}
\end{equation}
where we have set
$$\mathbf{v} = \Big( v_{(1|3,0)},  v_{(1|2,0)}, v_{(1|1,0)}, v_{(1|0,0)}, v_{(1|-1,0)}, v_{(1|0,1)}, v_{(2|0,0)}, v_{(2|0,1)} \Big)\;,$$
and $\mathbf{P}=(P_1,P_2) \in \mathbb{C}^{\mathcal{A}_1} \times \mathbb{C}^{\mathcal{A}_2}$.
For $\vec{\alpha}=(\alpha_1,\alpha_2) \in \mathbb{Q}^2$ and  $\vec{\beta}=(\beta_1,\beta_2) \in \mathbb{Q}^2$ we study the
$\mathcal{A}$-hypergeometric integrals of the form
\begin{equation}
\label{Ahypergeom}
 \phi_{\Sigma_2}\Big(\vec{\alpha}, \vec{\beta} \; \big| \, \mathbf{v}\Big) = \oiint_{\Sigma_2}
 P_1(x_1,x_2)^{\alpha_1} \, P_2(x_2)^{\alpha_2} \, x_1^{\beta_1} \, x_2^{\beta_2} \, dx_1 \wedge dx_2 \;.
\end{equation}
The domain of integration is contained in $\mathcal{U}(\mathbf{P}) := (\mathbb{C}^*)^2 \backslash \cup_i \lbrace P_i=0\rbrace$
where we assume that the hypersurfaces $P_i=0$ for $i=1,2$ are smooth and intersect each other transversely. 
The one-dimensional local system on $\mathcal{U}(\mathbf{P})$ defined 
by the monodromy exponents $\alpha_i$ around $\lbrace P_i=0\rbrace$ and $\beta_j$ around $\lbrace x_j=0\rbrace$ for $i,j =1,2$
will be denoted by $\mathcal{L}$. Since the integrand is multivalued and can have singularities, one has to carefully explain the meaning 
of the integral in Equation~(\ref{Ahypergeom}). These technical points were all addressed in \cite[Sec.\!~2.2]{MR1080980}.
There, a suitable chain complex with homology $H_*(\mathcal{U}(\mathbf{P}), \mathcal{L})$ was defined such that $\phi_{\Sigma_2}$ depends only on the
homology class of $[\Sigma_2] \in H_2(\mathcal{U}(\mathbf{P}), \mathcal{L})$.
In this way, the $\mathcal{A}$-hypergeometric integrals in Equation~(\ref{Ahypergeom}) becomes a multivalued functions in the variables $\mathbf{v}$. 

For $v_{(2|0,1)}, v_{(2|0,0)}, v_{(1|0,1)}, v_{(1|-1,0)} \not = 0$, we have 
\begin{equation}
\label{ToriAction}
\begin{split}
 & \quad  \phi_{\Sigma_2}\Big(\vec{\alpha}, \vec{\beta} \; \big| \, \mathbf{v}\Big)  = \left( \frac{ v_{(2|00)} }{ v_{(2|01)} } \right)^{\alpha_2+\beta_2-\beta_1}  \left( \frac{ v_{(1|-10)} }{ v_{(1|01)} } \right)^{1+\beta_1}
 v_{(2|00)}^{\alpha_1} v_{(1|01)}^{\alpha_2} \\
 \times \; & \;\left. \phi_{\Sigma_2}\left( \vec{\alpha}, \vec{\beta} \; \right| \,  \kappa^4 \frac{v_{(1|30)}}{v_{(1|-10)}}, \kappa^3  \frac{v_{(1|20)}}{v_{(1|-10)}}, \kappa^2 \frac{v_{(1|10)}}{v_{(1|-10)}}, 
 \kappa \frac{v_{(1|00)}}{v_{(1|-10)}}, 
  1, 1, 1, 1 \right) ,
 \end{split}
\end{equation}
where we have set
\begin{equation}
 \kappa= \frac{v_{(2|0,1)} v_{(1|-1,0)}}{v_{(2|0,0)} v_{(1|0,1)}} \;.
\end{equation} 
We define an affine version of the $\mathcal{A}$-hypergeometric integral by setting
\begin{equation}
\label{Ahypergeom_aff}
 \varphi_{\Sigma_2}\Big(\vec{\alpha}, \vec{\beta} \; \big| \, w_4,  w_3,  w_2,  w_1\Big): =\phi_{\Sigma_2}\left(\vec{\alpha}, \vec{\beta} \; \big| \,w_4,  w_3,  w_2,  w_1, 1, 1, 1, 1\right) .
\end{equation}
We now construct the differential system satisfied by the $\mathcal{A}$-hypergeometric integrals in Equation~(\ref{Ahypergeom}).
Using the Cayley trick we combine the sets $\mathcal{A}_1$ and $\mathcal{A}_2$ into the finite set $\mathcal{A} \subset \mathbb{Z}^4$ with
$$
\mathcal{A} = \left\lbrace 
  \left(\begin{array}{c} 1 \\ 0 \\ \hline 3 \\ 0 \end{array}\right),  \left(\begin{array}{c} 1 \\ 0 \\ \hline 2 \\ 0 \end{array}\right),  \left(\begin{array}{c} 1 \\ 0 \\ \hline 1 \\ 0 \end{array}\right), 
  \left(\begin{array}{c} 1 \\ 0 \\ \hline 0 \\ 0 \end{array}\right),  \left(\begin{array}{r} 1 \\ 0 \\ \hline -1 \\ 0 \end{array}\right),  \left(\begin{array}{c} 1 \\ 0 \\ \hline 0 \\ 1 \end{array}\right), 
   \left(\begin{array}{c} 0 \\ 1 \\ \hline 0 \\ 0 \end{array}\right), \left(\begin{array}{c} 0 \\ 1 \\ \hline 0 \\ 1 \end{array}\right) \right\rbrace.
$$
As the union of $\mathcal{A}_1$ and $\mathcal{A}_2$ generates $\mathbb{Z}^2$ as an Abelian group, and each $\mathcal{A}_i$ contains zero, the set $\mathcal{A}$ generates
$\mathbb{Z}^4$. There is a group homomorphism $h: \mathbb{Z}^4 \to \mathbb{Z}$ such that $h(\vec{\rho})=1$ for every $\vec{\rho} \in \mathcal{A}$. The homomorphism $h$ is obtained by
taking the sum of the first two components of each vector.  This means that $\mathcal{A}$ lies in a three-dimensional affine hyperplane in $\mathbb{Z}^4$.
Denote by $L(\mathcal{A}) \subset \mathbb{Z}^\mathcal{A}$ the lattice of linear relations among the elements of $\mathcal{A}$, i.e., the set of integer row vectors $(a_{\vec{\rho}^{\, t}})_{\vec{\rho} \in \mathcal{A}}$ 
such that $\sum_{\vec{\rho} \in \mathcal{A}} a_{\vec{\rho}^{\, t}} \cdot \vec{\rho} = 0$. In our case, this lattice of relations is $L(\mathcal{A})\cong \mathbb{Z}^4$ and generated by
the following row vectors
$$
L(\mathcal{A})\cong\left\lbrack \begin{array}{rrrrrr|rr}
  a_{(1|3,0)}  & a_{(1|2,0)} &  a_{(1|1,0)} & a_{(1|0,0)} & a_{(1|-1,0)} & a_{(1|0,1)} & a_{(2|0,0)} & a_{(2|0,1)}  \\
 \hline
  0	& 0	& 0	& -1	& 0	& 1	& 1	& -1	\\
  0	& 0	& 1	& -2	& 1	& 0	& 0	& 0	\\
  0	& 1	& -2	& 1	& 0	& 0	& 0	& 0	\\
  1	& -2	& 1	& 0	& 0	& 0	& 0 	& 0  \\
  \end{array}\right\rbrack \;.
 $$
 It follows from the above construction that the quotient $\mathbb{Z}^8/L(\mathcal{A}) $ is $\mathbb{Z}^4$ and torsion free.
 The complex torus $(\mathbb{C}^*)^{\mathcal{A}}$ within $\mathbb{C}^\mathcal{A}$, i.e., within the space of all vectors $\mathbf{v}=(v_{\vec{\rho}^{\, t}})_{\vec{\rho} \in \mathcal{A}}$ that
 define pairs $\mathbf{P}=(P_1,P_2)$ of Laurent polynomials, contains a subtorus $\mathbb{T}^4$ such that
the quotient equals
$$
 (\mathbb{C}^*)^{\mathcal{A} }/ \mathbb{T}^4 = \operatorname{Hom}\Big(L(\mathcal{A}), \mathbb{C}^*\Big) \;.
$$
In order to obtain the natural space on which the $\mathcal{A}$-hypergeometric integrals in Equation~(\ref{Ahypergeom}) are defined, 
Gel'fand, Kapranov and Zelevinsky developed the theory of the \emph{secondary fan}, i.e., a complete fan of rational polyhedral 
cones in the real vector space $\operatorname{Hom}(L(\mathcal{A}), \mathbb{R})$. The associated toric variety in turn determines the domains 
of convergence for various series expansions of the solutions as discs about the special points coming from the maximal cones in the secondary fan.
Equation~(\ref{Ahypergeom_aff}) then gives an integral representation in an affine chart.

Using the components of the vectors in $\mathcal{A}$, we define the first-order linear differential operators
\begin{equation}
\begin{split}
\mathsf{Z}_1  =  \sum_{k=-1}^3 v_{(1|k,0)} \frac{\partial}{\partial v_{(1|k,0)} } + v_{(1|0,1)} \frac{\partial}{\partial v_{(1|0,1)} }, &\quad
\mathsf{Z}_2  = v_{(2|0,0)} \frac{\partial}{\partial v_{(2|0,0)}}  + v_{(2|0,1)} \frac{\partial}{\partial v_{(2|0,1)} }, \\
\mathsf{Z}_3 = \sum_{k=-1}^3 k \, v_{(1|k,0)} \frac{\partial}{\partial v_{(1|k,0)} }, & \quad
\mathsf{Z}_4 = v_{(1|0,1)} \frac{\partial}{\partial v_{(1|0,1)}}  + v_{(2|0,1)} \frac{\partial}{\partial v_{(2|0,1)} }   .
\end{split}
\end{equation}
Similarly, using $L(\mathcal{A})$ one defines the second-order linear differential operators
\begin{equation}
\begin{split}
\Box_1  = \frac{\partial^2}{\partial v_{(2|0,0)}\, \partial v_{(1|0,1)}}  - \frac{\partial^2}{\partial v_{(2|0,1)} \, \partial v_{(1|0,0)}} , & \quad
\Box_2  = \frac{\partial^2}{\partial v_{(1|-1,0)}\, \partial v_{(1|1,0)}}  - \frac{\partial^2}{\partial v_{(1|0,0)}^{\,2}} ,\\
\Box_3  = \frac{\partial^2}{\partial v_{(1|0,0)}\, \partial v_{(1|2,0)}}  - \frac{\partial^2}{\partial v_{(1|1,0)}^{\,2}}  , & \quad
\Box_4 =  \frac{\partial^2}{\partial v_{(1|1,0)}\, \partial v_{(1|3,0)}}  - \frac{\partial^2}{\partial v_{(1|2,0)}^{\,2}} .
\end{split}
\end{equation}

The following lemma follows by applying \cite[Thm.\!~2.7]{MR1080980}:
\begin{lemma}
\label{lem:GKZ}
Under the above assumptions the $\mathcal{A}$-hypergeometric integral in Equation~(\ref{Ahypergeom}) satisfies for every $\Sigma_2 \in H_2(\mathcal{U}(\mathbf{P}),\mathcal{L})$ the system $\Phi$
of linear partial differential equations with finite dimensional solution space given by
\begin{equation}
\Phi \Big(\vec{\alpha}, \vec{\beta} \; \big| \, \mathbf{v}\Big) : \quad \left\lbrace \quad
\begin{aligned}
 \Box_i \; \phi_{\Sigma_2}\Big(\vec{\alpha}, \vec{\beta} \; \big| \, \mathbf{v}\Big)  & = 0 \;, \\
 \mathsf{Z}_j \;\phi_{\Sigma_2}\Big(\vec{\alpha}, \vec{\beta} \; \big| \, \mathbf{v}\Big) & = \gamma_j \; \phi_{\Sigma_2}\Big(\vec{\alpha}, \vec{\beta} \; \big| \, \mathbf{v}\Big) \;
\end{aligned}\right.
\end{equation}
for $i, j=1, \dots, 4$ and $\vec{\gamma} = \langle \alpha_1, \alpha_2,  -\beta_1-1, -\beta_2-1 \rangle$. 
\end{lemma}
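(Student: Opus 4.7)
The plan is to verify directly that the integrand $\Psi := P_1^{\alpha_1} P_2^{\alpha_2} x_1^{\beta_1} x_2^{\beta_2}$ of Equation~(\ref{Ahypergeom}) is annihilated by each $\Box_i$ (as a function on $\mathcal{U}(\mathbf{P})$) and is an eigenfunction of each $\mathsf{Z}_j$ with eigenvalue $\gamma_j$, up to an exact two-form whose integral over any cycle in $H_2(\mathcal{U}(\mathbf{P}),\mathcal{L})$ vanishes. This is exactly the specialisation of the general GKZ formalism in \cite{MR1080980} to the particular set $\mathcal{A}$ and lattice $L(\mathcal{A})$ displayed in the excerpt, and is the content of Theorem~2.7 of \cite{MR1080980}.

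For the box operators, I would argue as follows. Differentiation $\partial_{v_{(i|\vec{\rho})}}$ applied to $\Psi$ pulls out a factor $\alpha_i\, P_i^{-1} x^{\vec{\rho}}$. Hence for any second-order product $\partial_{v_{(i_1|\vec{\rho}_1)}} \partial_{v_{(i_2|\vec{\rho}_2)}} \Psi$ the resulting function depends only on the multiset $\{(i_1,\vec{\rho}_1),(i_2,\vec{\rho}_2)\}$ via the monomial $x^{\vec{\rho}_1+\vec{\rho}_2}$ and the factor $P_{i_1}^{-1}P_{i_2}^{-1}$ (with an obvious modification when $i_1=i_2$ and $\vec{\rho}_1=\vec{\rho}_2$, producing $\alpha_{i_1}(\alpha_{i_1}-1)$). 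Each of the four rows of the generating matrix for $L(\mathcal{A})$ displayed in the excerpt is a linear relation of the form $\vec{\rho}_1+\vec{\rho}_2 = \vec{\rho}_3+\vec{\rho}_4$ (with appropriate choices of the polynomial index $i$), and a direct inspection shows that it translates precisely into one of $\Box_1,\Box_2,\Box_3,\Box_4$. The equality of mixed partials then gives $\Box_i \Psi = 0$ pointwise on $\mathcal{U}(\mathbf{P})$, hence $\Box_i \phi_{\Sigma_2} = 0$.

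For the Euler operators, $\mathsf{Z}_1$ and $\mathsf{Z}_2$ are just the total degree operators in the coefficients of $P_1$ and $P_2$ respectively, so by Euler's identity applied to the homogeneous functions $P_i^{\alpha_i}$ one has $\mathsf{Z}_1 \Psi = \alpha_1\Psi$ and $\mathsf{Z}_2 \Psi = \alpha_2 \Psi$, yielding $\gamma_1=\alpha_1$, $\gamma_2=\alpha_2$. For $\mathsf{Z}_3$, one checks directly that
\begin{equation*}
 \mathsf{Z}_3 P_1 \;=\; -v_{(1|-1,0)} x_1^{-1} + v_{(1|1,0)} x_1 + 2 v_{(1|2,0)} x_1^{2} + 3 v_{(1|3,0)} x_1^{3} \;=\; x_1 \,\partial_{x_1} P_1,
\end{equation*}
so that $\mathsf{Z}_3 \Psi = x_1 \,\partial_{x_1}(P_1^{\alpha_1})\cdot P_2^{\alpha_2} x_1^{\beta_1} x_2^{\beta_2}$. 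Rewriting this as $\partial_{x_1}\!\left(x_1^{\beta_1+1} P_1^{\alpha_1} P_2^{\alpha_2}\right) x_2^{\beta_2} - (\beta_1+1)\Psi$ and integrating over $\Sigma_2$, the total derivative integrates to zero by Stokes' theorem in the twisted homology $H_2(\mathcal{U}(\mathbf{P}),\mathcal{L})$, leaving $\mathsf{Z}_3 \phi_{\Sigma_2} = -(\beta_1+1)\phi_{\Sigma_2}$. The same computation with $x_1\leftrightarrow x_2$ and using that $P_2$ depends on $x_2$ gives $\mathsf{Z}_4 \phi_{\Sigma_2} = -(\beta_2+1)\phi_{\Sigma_2}$, matching $\gamma_3,\gamma_4$ in the stated weight vector.

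The only delicate point is the vanishing of the boundary terms arising in the integration by parts for $\mathsf{Z}_3,\mathsf{Z}_4$: because the integrand $\Psi$ is multivalued and has branch loci along $\{P_i=0\}$ and $\{x_j=0\}$, one has to work with the chain complex computing the twisted homology $H_*(\mathcal{U}(\mathbf{P}),\mathcal{L})$ constructed in \cite[Sec.~2.2]{MR1080980}. The hard part is setting up this framework properly; once that is done, the vanishing of exact forms on cycles and the finite dimensionality of the solution space follow at once, and the lemma reduces to the four-line verification above. I would therefore keep the exposition short and simply remark that the computation fits the hypotheses of \cite[Thm.~2.7]{MR1080980}, which is the content of the one-line proof offered in the paper.
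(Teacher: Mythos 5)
Your proposal is correct and follows essentially the same route as the paper, which disposes of the lemma by invoking \cite[Thm.~2.7]{MR1080980}; your direct verification (the $\Box_i$ annihilate the integrand pointwise via the rows of $L(\mathcal{A})$ and equality of mixed partials, while the $\mathsf{Z}_j$ give the eigenvalues $\alpha_1,\alpha_2,-\beta_1-1,-\beta_2-1$ by Euler's identity and integration by parts in the twisted homology) is precisely the standard computation underlying that theorem, and your deferral to the GKZ framework for the vanishing of boundary terms and finite-dimensionality matches the paper's one-line citation.
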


\subsection{Differential systems from fibrations $\mathcal{J}_1$, $\mathcal{J}_2$, $\mathcal{J}_3$, $\mathcal{J}_5$}
For the remaining fibrations we will give a description of the Picard-Fuchs system by restricting the particular GKZ system described in Section~\ref{sec:GKZ}.
We have the following lemma:
\begin{lemma}
\label{lem:JJ}
For $i=1,2,3$ over $K[d_i]$ with $d_i^2$ given in Table~\ref{tab:KummerFibs}, 
the Picard-Fuchs system for the periods of the holomorphic two-form on the family 
$\operatorname{Kum}(\mathcal{E}_1\times \mathcal{E}_2)$ is given by
\begin{equation}
\label{PF_JJ}
 \frac{1}{g_i} \;  \Phi \left.\left(\vec{\alpha}_i, \vec{\beta}_i \; \right| \, \mathbf{v}_i \right)\;,
\end{equation}
where $g_i, \vec{\alpha}_i, \vec{\beta}_i, \mathbf{v}_i$ are given in Table~\ref{tab:KummerGKZ}. 
In particular, the restrictions define systems of linear differential equations in two variables holonomic of rank four.
\end{lemma}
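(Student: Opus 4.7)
The plan is to follow the strategy already used for fibrations $\mathcal{J}_4$, $\mathcal{J}_6$, $\mathcal{J}_7$, $\mathcal{J}_9$ and reduce each period computation to an $\mathcal{A}$-hypergeometric integral of precisely the shape covered by Lemma~\ref{lem:GKZ}. First, for each $i\in\{1,2,3\}$, I would invoke Proposition~\ref{Prop1} to realize $\mathcal{J}_i$ as the pullback of a specific extremal rational elliptic surface from Table~\ref{tab:3ExtRatHg} via the rational base change $t=t_i(u)$ together with the quadratic twist $T=T_i(u)$. Writing the holomorphic two-form as $\omega=du\wedge dX/Y$ and choosing a transcendental two-cycle $\Sigma_2$ that fibers over a real segment in the $u$-line with endpoints at branch points of $\sqrt{T_i(u)}$, the period reduces to the iterated integral
\begin{equation*}
\oiint_{\Sigma_2}\omega \;=\; c\,\int \frac{du}{\sqrt{T_i(u)}}\,\oint_{\Sigma_1(u)}\frac{dx}{y}\;,
\end{equation*}
where the inner integral is evaluated by Lemma~\ref{Lem1} or Remark~\ref{Rem:dual_period} as a $\hpgo21$ in the variable $t_i(u)$ with parameter $\mu$ dictated by the Kodaira type of the rational model.

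Next, I would apply Euler's integral representation~(\ref{GaussIntegral}) to the inner $\hpgo21$ factor. This turns the period into a double integral over a two-dimensional chain of the form
\begin{equation*}
\oiint_{\Sigma_2}\omega \;=\; \mathrm{const}\,\iint P_1(x_1,x_2)^{\alpha_1}\,P_2(x_2)^{\alpha_2}\,x_1^{\beta_1}\,x_2^{\beta_2}\,dx_1\wedge dx_2\;,
\end{equation*}
where after clearing algebraic denominators in $u$ the expression $T_i(u)\cdot (\text{linear combination arising from Euler's kernel})$ assembles into a single cubic (or quartic-after-shift-to-Laurent) polynomial $P_1$ of the exact shape supported on $\mathcal{A}_1$, together with a linear polynomial $P_2$ supported on $\mathcal{A}_2$. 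This identification reads off the vector $\mathbf{v}_i\in\mathbb{C}^{\mathcal{A}_1}\times\mathbb{C}^{\mathcal{A}_2}$, the exponent vectors $\vec{\alpha}_i,\vec{\beta}_i$, and the prefactor $g_i$ recorded in Table~\ref{tab:KummerGKZ}; passing to the affine form via~(\ref{ToriAction}) and~(\ref{Ahypergeom_aff}) exhibits the integral as a function of only two independent ratios, consistent with the two-dimensional moduli.

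Once the period is in the form~(\ref{Ahypergeom}), Lemma~\ref{lem:GKZ} immediately yields that it is annihilated by the GKZ system $\Phi(\vec{\alpha}_i,\vec{\beta}_i\mid \mathbf{v}_i)$, with $\vec{\gamma}=\langle \alpha_1,\alpha_2,-\beta_1-1,-\beta_2-1\rangle$ verified by matching the exponents read off the Kodaira types against the integrand. Varying the chain $\Sigma_2$ through the four independent classes in $H_2(\mathcal{U}(\mathbf{P}),\mathcal{L})$ produces four linearly independent period solutions, while the rank-$18$ Picard lattice of $\operatorname{Kum}(\mathcal{E}_1\times\mathcal{E}_2)$ bounds the Picard-Fuchs rank from above by four; hence the restricted GKZ system is holonomic of rank four, exactly as in the closing argument of Lemma~\ref{lem:J4}.

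The main obstacle I anticipate is the second step: the explicit algebraic identification of $(P_1,P_2)$ and the normalizing factor $g_i$ in a form that matches the exponent support of $\mathcal{A}$. For $\mathcal{J}_1$ and $\mathcal{J}_2$ the base change $t_i(u)$ has several ramification points not over $\{0,1,\infty\}$, so one must carefully track which branch of $\sqrt{T_i(u)}$ is swept out by the chosen $\Sigma_2$ and how this correlates with the orientation of the inner cycle $\Sigma_1(u)$; any sign or normalization slip there propagates directly into the factor $g_i$ and the parameter $\vec{\gamma}$. This is a bookkeeping computation best carried out symbolically, but it is the step where the content of Table~\ref{tab:KummerGKZ} is actually produced, and hence where the proof is genuinely to be verified rather than merely cited.
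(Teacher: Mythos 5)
Your proposal is correct and follows essentially the same route as the paper: reduce the period via Proposition~\ref{Prop1} and Remark~\ref{Rem:dual_period} to an iterated integral over a branch cut, apply Euler's representation~(\ref{GaussIntegral}) to exhibit it as an $\mathcal{A}$-hypergeometric integral annihilated by $\Phi$ through Lemma~\ref{lem:GKZ}, normalize with the torus action~(\ref{ToriAction}), and pin the rank at four exactly as in Lemma~\ref{lem:J4}. The only detail the paper adds beyond your sketch is the explicit substitution $u\mapsto\sqrt{u}$ that brings $\mathcal{J}_1$ (and analogously $\mathcal{J}_3$) into the required Laurent-polynomial form, which is precisely the bookkeeping step you flag as the place where Table~\ref{tab:KummerGKZ} gets produced.
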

\begin{proof}
We first look at the fibration $\mathcal{J}_2$.
Using the Jacobian elliptic fibration $\mathcal{J}_2$ on $\operatorname{Kum}(\mathcal{E}_1\times \mathcal{E}_2)$
and the holomorphic two-form $\omega=du \wedge dX/Y$, there is a transcendental two-cycle $\Sigma'_2$ such that the period integral 
reduces to the iterated integral
\begin{equation}
\oiint_{ \Sigma'_2} \omega =   2  \int_0^\infty \dfrac{du}{\sqrt{\,T_2(u)}}  \;  \oint_{\Sigma'_1} \frac{dx}{y} \;,
\end{equation}
where we used Proposition~\ref{Prop1} to relate the double integral to an integral for the holomorphic one-form $dx/y$ on the extremal
rational elliptic surface $\mathcal{X}_{411}$ and then reduced the outer integration to an integration along a branch cut.
Using Remark~\ref{Rem:dual_period} and Equation~(\ref{IntegralTransform}), we evaluate the period integral further to obtain
\begin{equation}
\begin{split}
\oiint_{ \Sigma'_2} \omega &=  2  \int_0^\infty \dfrac{du}{\sqrt{\,T_2(u)}}\; \dfrac{1}{\sqrt{t_2}} \; \hpg21{ \frac{1}{2}, \frac{1}{2}}{1}{\frac{1}{t_2}}  \;.
\end{split}
\end{equation}
Using the integral representation of $\hpgo21$ in Equation~(\ref{GaussIntegral}) and Table~\ref{tab:KummerFibs}, it follows that
the period integrals are of $\mathcal{A}$-hypergeometric type and annihilated by the GKZ system
\begin{equation}
 \dfrac{1}{\sqrt{d_2}} \; \Phi \left.\left(\vec{\alpha}=\left\langle - \frac{1}{2}, - \frac{1}{2} \right\rangle, \vec{\beta}=\left\langle - \frac{1}{2}, - \frac{1}{2} \right\rangle\; \right| \, \mathbf{v}\right) \;,
\end{equation}
where we have set
\begin{equation}
 \mathbf{v} = \left(  \frac{1}{16 \, d_2}, 0, \frac{1}{2}, \frac{(2\lambda_1\lambda_2-\lambda_1-\lambda_2+2)}{8 \, d_2}, \frac{(\lambda_1-\lambda_2)^2}{16 \, d_2}, 1, 1, 1\right).
\end{equation} 
We use the torus action given in Equation~(\ref{ToriAction}) to normalize.  
For $\mathcal{J}_5$ we applied the transformation $(Y,X,u) \mapsto (Y/u^6,X/u^4,1+1/u)$ in the proof of Proposition~\ref{Prop1}.
The transformation changes only the sign of the holomorphic two-from. The result for fibration $\mathcal{J}_5$ then follows from the computation for $\mathcal{J}_2$.

For fibration $\mathcal{J}_1$ the coordinate transformation $u \mapsto \sqrt{u}$ allows us to show that the period integrals are of $\mathcal{A}$-hypergeometric type 
and annihilated by the GKZ system
\begin{equation}
 \dfrac{1}{\sqrt{d_1}} \; \Phi \left.\left(\vec{\alpha}=\left\langle - \frac{1}{2}, - \frac{1}{2} \right\rangle, \vec{\beta}=\left\langle - 1, - \frac{1}{2} \right\rangle\; \right| \, \mathbf{v}\right) \;,
\end{equation}
where we have set
\begin{equation}
 \mathbf{v} = \left( 0, 0, \frac{(1-\lambda_1)^2}{16 \, d_1}, \frac{1}{2} - \frac{(1+\lambda_1)(1+\lambda_2)}{8 \, d_1},  \frac{(1-\lambda_2)^2}{16 \, d_1},  1, 1, 1 \right) \;.
\end{equation} 
Again we use the torus action given in Equation~(\ref{ToriAction}) to normalize.  
The result for fibration $\mathcal{J}_3$ follows closely the computation for $\mathcal{J}_1$. 
\end{proof}

In summary, we have considered various quadratic field extensions of the field $K=\mathbb{C}(\lambda_1,\lambda_2)$ of moduli for the pair of elliptic 
curves $\mathcal{E}_1$ and $\mathcal{E}_2$ and derived all representations for the Picard-Fuchs system satisfied by the periods of the holomorphic two-form
that can be derived from the eleven Jacobian elliptic fibrations on the Kummer surface $\operatorname{Kum}(\mathcal{E}_1\times \mathcal{E}_2)$ of two non-isogeneous elliptic curves:

\begin{theorem}
\label{thm:GM}
The Picard-Fuchs system for the periods of the holomorphic two-form on the family $\operatorname{Kum}(\mathcal{E}_1\times \mathcal{E}_2)$
of Kummer surfaces for two non-isogeneous elliptic curves  $\mathcal{E}_1$ and $\mathcal{E}_2$ with modular parameters $\lambda_1$ and $\lambda_2$, respectively,
has the following equivalent representations as linear differential systems in two variables holonomic of rank four:
\begin{equation}
\label{form:J4}
 \hpgd21{\frac{1}{2},\,\frac{1}{2}}{1}{\lambda_1} \boxtimes  \hpgd21{\frac{1}{2},\,\frac{1}{2}}{1}{\lambda_2} \;.
\end{equation}
Over $K[d_7]$ with $d_7^2=\lambda_1\lambda_2$, the system~(\ref{form:J4}) is equivalent to the Appell hypergeometric system
\begin{equation}
\label{form:J7}
 \dfrac{1}{\sqrt{\lambda_1+\lambda_2+2 \, d_7}} \; \appd2{\frac{1}{2};\;\frac{1}{2},\frac{1}{2}}{1,1}{\dfrac{4 \, d_7}{\lambda_1+\lambda_2+2 \, d_7},  -\dfrac{(1-\lambda_1)(1-\lambda_2)}{\lambda_1+\lambda_2+2 \, d_7}} \;.
\end{equation}
Over $K[d_9]$ with $d_9^2= (\lambda_1^2 - \lambda_1 +1) (\lambda_2^2 - \lambda_2 +1)$, the system~(\ref{form:J4}) is equivalent to the Appell hypergeometric system
\begin{equation}
\label{form:J9}
 \dfrac{1}{\sqrt{R_9 + S_9  + 4 \, d_9}} \;  \appd2{\frac{1}{2};\;\frac{1}{6},\frac{1}{2}}{\frac{1}{3},1}{\dfrac{8 \,d_9}{R_9 + S_9  + 4 \, d_9},  \dfrac{S_9}{R_9 + S_9  + 4 \, d_9}} \;,
\end{equation}
with
\begin{equation}
\begin{split}
 R_9 & = 27 \, \lambda_1 (\lambda_1-1)  \lambda_2 (\lambda_2-1) \;, \\
 S_9 & = (\lambda_1+1)(\lambda_1-2)(2\lambda_1-1) (\lambda_2+1)(\lambda_2-2)(2\lambda_2-1) \;.
\end{split}
\end{equation}
Over $K[d_i]$ for $i=1,2,3$ with $d_i^2$ given in Table~\ref{tab:KummerFibs}, the system~(\ref{form:J4}) is equivalent to the restrictions of the GKZ system
introduced in Section~\ref{sec:GKZ} given by
\begin{equation}
\label{form:JJ}
 \frac{1}{g_i} \;  \Phi \left.\left(\vec{\alpha}_i, \vec{\beta}_i \; \right| \, \mathbf{v}_i \right)
\end{equation}
where $\vec{\alpha}_i, \vec{\beta}_i, g_i, \mathbf{v}_i$ are given in Table~\ref{tab:KummerGKZ}. 
\end{theorem}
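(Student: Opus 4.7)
The plan is to assemble Theorem~\ref{thm:GM} directly from Lemmas~\ref{lem:J4}, \ref{lem:J7}, \ref{lem:J9}, \ref{lem:JJ} by invoking the fact that the Picard-Fuchs system of the family $q: \mathcal{Y}_{\lambda_1,\lambda_2} \to \mathcal{M}$ is intrinsic to the variation of Hodge structure of the fibers. Since the holomorphic two-form $\omega$ on each Kummer surface is unique up to scale, any period integral of $\omega$ against a locally constant transcendental two-cycle $\Sigma_2$ satisfies the same holonomic system, independent of which Jacobian elliptic fibration was used to compute it. Moreover, the Picard rank of $\operatorname{Kum}(\mathcal{E}_1\times \mathcal{E}_2)$ for non-isogenous elliptic curves equals eighteen, so the transcendental lattice $T(\mathcal{Y}_{\lambda_1,\lambda_2})$ has rank $22-18=4$; consequently, the Picard-Fuchs system must be holonomic of rank exactly four.

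First, I would apply Lemma~\ref{lem:J4} using the fibration $\mathcal{J}_4$ to extract the outer-tensor-product representation~(\ref{form:J4}). By the rank-four uniqueness just noted, any other system annihilating the same four-dimensional space of period integrals must be equivalent to it. I would then apply Lemmas~\ref{lem:J7} and~\ref{lem:J9}, working over the quadratic extensions $K[d_7]$ and $K[d_9]$, to obtain the Appell hypergeometric systems~(\ref{form:J7}) and~(\ref{form:J9}); both lemmas explicitly identify the period as a solution of the corresponding rescaled $\appdo2$ system, and by varying $\Sigma_2$ one exhibits four linearly independent solutions, matching the rank. Finally, Lemma~\ref{lem:JJ} furnishes the restrictions~(\ref{form:JJ}) of the particular GKZ system introduced in Section~\ref{sec:GKZ}, which arise from the fibrations $\mathcal{J}_1, \mathcal{J}_2, \mathcal{J}_3$ (and in turn $\mathcal{J}_5$) via the $\mathcal{A}$-hypergeometric framework of Lemma~\ref{lem:GKZ}.

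The step requiring the most care is the passage from ``period integrals coincide'' to ``differential systems are equivalent''. Each lemma produces periods only up to an overall prefactor that involves the square root of a polynomial in $\lambda_1,\lambda_2$; these prefactors are precisely the gauge rescalings appearing in~(\ref{form:J7}), (\ref{form:J9}), (\ref{form:JJ}). To establish equivalence cleanly, I would compare the annihilating systems after the explicit gauge transformation (as in Corollary~\ref{LemmaTensorSystem} and Equation~(\ref{RelationConnectionMatrices}), a special case already treated in~\cite{Clingher:2015aa}), confirming that on the four-dimensional solution space they act identically. The remaining equivalences then follow by the same dimension count together with the fact that the varieties over which these systems are defined are all in algebraic correspondence with $\mathcal{M}$ via the substitutions recorded in Section~\ref{SEC:EllFib}.

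Having verified each of the four representations individually and noting that the rank-four uniqueness of the Picard-Fuchs system forces any two such representations to be gauge-equivalent, the theorem follows by collecting Lemmas~\ref{lem:J4}, \ref{lem:J7}, \ref{lem:J9}, and~\ref{lem:JJ}.
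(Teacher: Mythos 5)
Your proposal is correct and follows essentially the same route as the paper, whose proof of Theorem~\ref{thm:GM} is precisely the comparison of Lemmas~\ref{lem:J4}, \ref{lem:J7}, \ref{lem:J9}, and~\ref{lem:JJ}; your added remarks on the rank-four uniqueness of the Picard-Fuchs system and the gauge equivalence via Corollary~\ref{LemmaTensorSystem} simply make explicit what the paper leaves implicit.
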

\begin{proof}
The comparison Lemma~\ref{lem:J4} , Lemma~\ref{lem:J7}, Lemma~\ref{lem:J9}, and Lemma~\ref{lem:JJ} 
gives the desired result.
\end{proof}
In particular, the comparison of Equation~(\ref{form:J4}) and Equation~(\ref{form:J9}) proves that
the Appell hypergeometric system can be decomposed as an outer tensor product of two rank-two systems using a cubic transformation. 

\newpage

\begin{table}[ht]
\scalebox{\MyScale}{
\begin{tabular}{|c|lcl|c|c|c|c|} \hline
\# & \multicolumn{3}{|c|}{$g_2, g_3, \Delta, J$} & \multicolumn{4}{|c|}{ramification of $J$ and singular fibers} \\[0.2em]
\hline
 $\operatorname{MW}(\pi)$ &  \multicolumn{3}{|c|}{sections}  & $t$ & $J$ & $m(J)$ & fiber   \\
\hline
\hline
$\mathcal{X}_{11}(\lambda)$    	& $g_2$ 	& $=$ & $\frac{16}{3} (\lambda ^2 - \lambda +1)(t-1)^2$ 									& $1$ 	& $J(\lambda)$       & - & $I_0^* \; \; (D_4)$  \\[0.4em]
$\mu=0$					& $g_3$ 	& $=$ & $\frac{32}{27}(\lambda - 2)(\lambda +1)(2 \lambda -1)(t-1)^3$							& $\infty$	& $J(\lambda)$       & - & $I_0^* \; \; (D_4)$  \\[0.4em]
						& $\Delta$& $=$ & $16 \, \lambda^2 (\lambda-1)^2 \, (t-1)^6$    										& & & & \\[0em]
						& $J=J(\lambda)$& $=$ & $\frac{4 \,(\lambda^2-\lambda+1)^3}{27 \lambda^2 (\lambda-1)^2}$				& & & & \\[0.3em]
\cline{1-4}
$(\mathbb{Z}/2\mathbb{Z})^2$	& $(X,Y)_{1}$ & $=$ & $(-\frac{2}{3}(\lambda+1)(t-1),0)$ 											& & & & \\[0.4em]
						& $(X,Y)_{2}$ & $=$ & $(-\frac{2}{3}(\lambda-2)(t-1),0)$ 											& & & & \\[0.4em]
						& $(X,Y)_{3}$ & $=$ & $(\frac{2}{3}(2\lambda-1)(t-1),0)$ 											& & & & \\[0.4em]
\hline
$\mathcal{X}_{411}$ 	& $g_2$ 	& $=$ & $ \frac{1}{3} (64t^2-64t+4)$				& $\frac{1}{4} \left( 2 \pm \sqrt{3}\right)$ 				& $0$       & $3$ & smooth  \\[0.4em]
$\mu=\frac{1}{2}$	& $g_3$ 	& $=$ & $\frac{8}{27}(2t-1)(32t^2-32t-1)$			& $\frac{1}{8} \left( 4 \pm 3 \sqrt{2}\right), \frac{1}{2}$		& $1$       & $2$ & smooth  \\[0.4em]
				& $\Delta$& $=$ & $256 \, t \, (t-1)$    					& $0$										& $\infty$ & $1$ & $I_1$     \\[0.0em]
				& $J$	& $=$ & $\frac{(16t^2-16 t+1)^3}{108 t (t-1)}$		& $1$                                        		 				& $\infty$ & $1$ & $I_1$     \\[0.3em]
\cline{1-4}
$\mathbb{Z}/2\mathbb{Z}$	& $(X,Y)_1$ & $=$ & $(-\frac{4}{3}t+\frac{2}{3},0)$ 	& $\infty$										& $\infty$ & $4$ & $I_4^* \; (D_8)$ \\[0.4em]
\hline
$\mathcal{X}_{222}$ 	& $g_2$ 	& $=$ & $\frac{16}{3} (t^2-t+1)$				& $\frac{1}{2} \left( 1 \pm i \sqrt{3}\right)$ 				& $0$       & $3$ & smooth  \\[0.4em]
$\mu=\frac{1}{2}$		& $g_3$ 	& $=$ & $\frac{32}{27}(t-2)(t+1)(2t-1)$		& $-1, \frac{1}{2}, 2$								& $1$       & $2$ & smooth  \\[0.4em]
					& $\Delta$& $=$ & $1024 \, t^2 \, (t-1)^2$    			& $0$										& $\infty$ & $2$ & $I_2$     \\[0.0em]
					& $J$	& $=$ & $\frac{4 \,(t^2-t+1)^3}{27 t^2 (t-1)^2}$	& $1$                                        		 				& $\infty$ & $2$ & $I_2$     \\[0.3em]
\cline{1-4}
$(\mathbb{Z}/2\mathbb{Z})^2$	& $(X,Y)_1$ & $=$ & $(-\frac{2}{3}(t+1),0)$	 	& $\infty$										& $\infty$ & $2$ & $I_2^* \; (D_6)$ \\[0.4em]
						& $(X,Y)_2$ & $=$ & $(-\frac{2}{3}(t-2),0)$	 	&											&		&	  &			      \\[0.4em]
						& $(X,Y)_3$ & $=$ & $(\frac{2}{3}(2t-1),0)$	 	&											&		&	  &			      \\[0.4em]
\hline
$\mathcal{X}_{211}$		& $g_2$	& $=$ & $3$               					& $\infty$					                				& $0$       & $2$ & $II^*\; \; (E_8)$  \\[0.4em]
$\mu=\frac{1}{6}$ 		& $g_3$	& $=$ & $-1 + 2 \, t$    					& $\frac{1}{2}$    								& $1$	& $2$ & smooth  \\[0.4em]
					& $\Delta$& $=$ & $- 108 \, t \, (t-1)$    				& $0$										& $\infty$ 	& $1$ &  $I_1$    \\[0.0em]
\cline{1-1}
$\lbrace 0 \rbrace$		& $J$	& $=$ & $- \frac{1}{4 \, t \, (t-1)}$      	   	  	& $1$                                        						& $\infty$  & $1$ &  $I_1$     \\[0.4em]
\hline
\end{tabular}
\caption{Extremal rational elliptic surfaces}\label{tab:3ExtRatHg}}
\end{table}

\begin{landscape}
\begin{table}[ht]
\scalebox{\MyScale}{
\begin{tabular}{|c|c|c|c|}
\hline
\#  	& singular fibers 			& rational	& rational base transformation	\\[-1pt]
\cline{2-2}\cline{4-4}
	&  $\operatorname{MW}(\pi)$	& surface 	& quadratic twist, $d^2$ 		\\
\hline
\hline
&&&\\[-1.7em]
	$\mathcal{J}_{1}$ 	& $2 I_{8} + 8 I_{1}$		
& 	$\mathcal X_{411}$	
& $t_1=\dfrac{(1-\lambda_1)^2 \, u^4 - 2(1+\lambda_1)(1+\lambda_2) \, u^2 + (1-\lambda_2)^2}{16 \,  d_1 \, u^2}+ \dfrac{1}{2}$ \\[-3pt]
					& $\mathbb{Z}^2 \oplus \mathbb{Z}/2\mathbb{Z}$
&					& $T_1=- \,4 \, d_1 \, u^2, \quad d_1^2 = \lambda_1 \lambda_2$  \\
\hline
&&&\\[-1.7em]
	$\mathcal{J}_{2}$ 	& $I_{4} +I_{12} + 8I_{1} $ 
&  	$\mathcal X_{411}$	
& $t_2= \dfrac{u^4 + 2 \, (2\lambda_1\lambda_2-\lambda_1-\lambda_2+2) \, u^2 + (\lambda_1-\lambda_2)^2}{16 \, d_2 \, u} + \dfrac{1}{2}$ \\[-3pt]
					& $A_2^*[2] \oplus \mathbb{Z}/2\mathbb{Z}$
&					& $T_2=- \, 4 \, d_2 \, u, \quad d_2^2=-\lambda_1 \lambda_2 (1-\lambda_1)(1-\lambda_2)$ \\
\hline
&&&\\[-1.7em]
	$\mathcal{J}_{3}$ 	&  $2 IV^{*} + 8 I_{1}$
&	$\mathcal X_{211}$	
& $t_3=\dfrac{27\lambda_1^2(\lambda_1-1)^2\,u^4 + 2 (\lambda_1+1)(\lambda_1-2)(2\lambda_1-1)(\lambda_2+1)(\lambda_2-2)(2\lambda_2-1) \, u^2 + 27\lambda_2^2(\lambda_2-1)^2}{16\, d_3^3\, u^2} + \dfrac{1}{2}$  \\
					& $\big(A_2^*[2]\big)^2$
&  					& $T_3=-\frac{8}{3} \, d_3 \, u^2, \quad d_3^2=(\lambda_1^2-\lambda_1+1)(\lambda_2^2-\lambda_2+1)$\\
\hline
	$\mathcal{J}_{4}$ 	& $4I_{0}^{*}$ 
&	$X_{11}(\lambda_2)$
& $t_4=u$ \\[-4pt]
					& $\big(\mathbb{Z}/2\mathbb{Z}\big)^2$
& 					& $T_4=\frac{1}{2} \, u\,(u-\lambda_1)$\\
\hline
&&&\\[-1.7em]
	$\mathcal{J}_{5}$ 	& $I_{6}^{*} + 6 I_{2}$ 
& 	$\mathcal X_{222}$
& $t_5= \dfrac{-\lambda_1^2 (\lambda_2-1)^2 u^3 +\lambda_1(\lambda_2-1)(1+\lambda_1+\lambda_2-2\lambda_1\lambda_2)\, u^2-(1-\lambda_1\lambda_2)(\lambda_1+\lambda_2-\lambda_1\lambda_2) \, u}{\lambda_2(\lambda_1-1)}+1$\\
					& $\big(\mathbb{Z}/2\mathbb{Z}\big)^2$
&					& $T_5=-\frac{1}{2} \, \lambda_1 \lambda_2 (\lambda_1-1) (\lambda_2-1)$\\
\hline
&&&\\[-1.7em]
	$\mathcal{J}_{6}$ 	& $2 I_{2}^{*} + 4 I_{2}$ 
& 	$\mathcal X_{222}$
& $t_6 = \dfrac{\lambda_2\, u^2 + (\lambda_2- \lambda_1) \, u + \lambda_1}{(1-\lambda_1)(1-\lambda_2) \, u}$\\
					& $\big(\mathbb{Z}/2\mathbb{Z}\big)^2$
&					& $T_6=- \frac{1}{2} \, (\lambda_1-1) \, (\lambda_2-1) \, u^2$\\
\hline
&&&\\[-1.7em]
	$\mathcal{J}_{7}$ 	& $I_{4}^{*} + 2 I_{0}^{*} + 2 I_{1}$ 
&	$\mathcal X_{411}$
& $t_7 =\dfrac{(\lambda_1\lambda_2+1) \, u - \lambda_1 -\lambda_2}{4 \, d_7 \, (u-1)} + \dfrac{1}{2}$ \\
					& $\mathbb{Z}/2\mathbb{Z}$
&					& $T_7=d_7 \, u \, (u-1)^2, \quad d_7^2=\lambda_1\lambda_2$\\
\hline
&&&\\[-1.7em]
	$\mathcal{J}_{9}$	& $II^{*} + 2 I_{0}^{*} + 2 I_{1}$
&	$\mathcal X_{211}$
& $t_9 =\dfrac{B_9 \, u - A_9}{u-1}, \quad T = - \frac{2}{3} d_9 \, u \, (u-1)^2,\quad d_9^2= (\lambda_1^2 - \lambda_1 +1) (\lambda_2^2 - \lambda_2 +1)$ \\[3pt]
					& $\lbrace 0 \rbrace$
&& $A_9= \dfrac{(2 \lambda_1 \lambda_2 - \lambda_1 - \lambda_2 +2) (\lambda_1 \lambda_2 + \lambda_1 - 2 \lambda_2 +1) (\lambda_1 \lambda_2 - 2 \lambda_1 + \lambda_2 +1) }{4 \, d^3_9} - \dfrac{1}{2}$ \\[5pt]
&&& $B_9 = \dfrac{(2 \lambda_1 \lambda_2 -\lambda_1 - \lambda_2 -1 )(\lambda_1 \lambda_2 + \lambda_1 + \lambda_2 -2) (\lambda_1 \lambda_2 - 2 \lambda_1 - 2 \lambda_2 +1) }{4 \, d^3_9} -\dfrac{1}{2}$ \\[8pt]
\hline
\end{tabular}
\caption{Fibrations on $\operatorname{Kum}(\mathcal{E}_1\times \mathcal{E}_2)$ by rational base transformations and quadratic twists}\label{tab:KummerFibs}}
\end{table}
\end{landscape}

\begin{table}[ht]
\scalebox{\MyScale}{
\begin{tabular}{|c|c|c|c|}
\hline
\#  	& singular fibers 			& related	& rational transformation	\\[-1pt]
\cline{2-2}
	&  $\operatorname{MW}(\pi)$	& fibration	& 					\\
\hline
\hline
&&&\\[-1.7em]
	$\mathcal{J}_{8}$ 	& $III^* + I_2^* + 3 I_2 + I_1$	& 	$\mathcal{J}_{7}$	
							& $u_8 = \dfrac{u_7^2 \, (u_7-1)}{x_7}$ \\[5pt]
& 						&	& $x_8=\dfrac{\big(x_7- (\lambda_1-1)(\lambda_2-1) u_7^2(u_7-1)\big)\, u_7^2}{x_7^2}$  \\[5pt]
\cline{2-2}
& $\mathbb{Z}/2\mathbb{Z}$ 	& 	& $y_8 = -\dfrac{\big(x_7 - (\lambda_1-1)(\lambda_2-1)u_7^2(u_7-1)\big) \, u_7^4 \, y_7}{x_7^4}$ \\[10pt]
\hline
&&&\\[-1.7em]
	$\mathcal{J}_{10}$ 	& $I_8^*+I_0^*+4 I_1$	& 	$\mathcal{J}_{9}$	
							& $u_{10}=\dfrac{x_9}{(u_9-1)^2}$ \\[5pt]
& 						&	& $x_{10}=-\dfrac{\lambda_1 \lambda_2 (\lambda_1-1)(\lambda_2-1)\, u_9}{u_9-1}$  \\[5pt]
\cline{2-2}
& $\lbrace 0 \rbrace$		 	& 	& $y_{10} = -\dfrac{\lambda_1 \lambda_2 ( \lambda_1-1)(\lambda_2 -1) y_9}{(u_9-1)^4}$ \\[10pt]
\hline
&&&\\[-1.7em]
	$\mathcal{J}_{11}$ 	& $2 I_4^*+4 I_1$		& 	$\mathcal{J}_{7}$	
							& $u_{11} = \dfrac{\lambda_2 u_7^2 (u_7 -1)^2}{x_7}$ \\[5pt]
& 						&	& $x_{11} = -\dfrac{\lambda_2^2 (\lambda_1-1)(\lambda_2-1) \, u_7^4 \, (u_7-1)^3}{x_7^2}$  \\[5pt]
\cline{2-2}
& $\lbrace 0 \rbrace$		 	& 	& $y_{11} = \dfrac{\lambda_2^3  (\lambda_1 - 1)(\lambda_2-1) \, u_7^6 \, (u_7 - 1)^4\, y_7}{x_7^4}$ \\[10pt]
\hline
\end{tabular}
\caption{Related elliptic fibrations on $\operatorname{Kum}(\mathcal{E}_1\times \mathcal{E}_2)$}\label{tab:KummerRels}}
\end{table}

\bigskip

\begin{table}[ht]
\scalebox{\MyScale}{
\begin{tabular}{|c|c|c|c|}
\hline
\#  & $\vec{\alpha}, \vec{\beta}$ 	& $g$ & $\mathbf{v}$ \\[5pt]
\hline
\hline
	$\mathcal{J}_{1}$ 	& $\vec{\alpha}=\left\langle - \frac{1}{2}, - \frac{1}{2} \right\rangle$  &  $g_1 =\sqrt{d_1}$
& $\mathbf{v}_1=\left( 0, 0,v_{(1|1,0)}, v_{(1|0,0)}, 1, 1, 1, 1 \right)$ \\[5pt]
\cline{3-3}
					& $\vec{\beta}=\left\langle - 1, - \frac{1}{2} \right\rangle$   &
 \multicolumn{2}{|c|}{$ v_{(1|1,0)}=\frac{(1-\lambda_1)^2(1-\lambda_2)^2}{2^8 \, d_1^2}, \; v_{(1|0,0)}= \frac{1}{2} - \frac{(1+\lambda_1)(1+\lambda_2)}{8 \, d_1}$}\\[5pt]
\hline
	$\mathcal{J}_{2}$ 	& $\vec{\alpha}=\left\langle - \frac{1}{2}, - \frac{1}{2} \right\rangle$  &  $g_2 =\frac{d_2}{\lambda_1-\lambda_2}$
& $\mathbf{v}_2=\left( v_{(1|3,0)}, 0, v_{(1|1,0)}, \frac{1}{2}, 1, 1, 1, 1  \right)$ \\[5pt]
					& $\vec{\beta}=\left\langle - \frac{1}{2}, - \frac{1}{2} \right\rangle$   &
 \multicolumn{2}{|c|}{$v_{(1|3,0)} = \frac{(\lambda_1-\lambda_2)^6}{2^{16} \, d_2^4}, \; v_{(1|1,0)}= \frac{(2\lambda_1\lambda_2-\lambda_1-\lambda_2+2)(\lambda_1-\lambda_2)^2}{2^7 \, d_2^2}$}\\[5pt]
\hline
	$\mathcal{J}_{3}$ 	& $\vec{\alpha}=\left\langle - \frac{1}{6}, - \frac{5}{6} \right\rangle$  &  $g_2 =\sqrt{d_3}$
& $\mathbf{v}_3=\left( 0, 0,  v_{(1|1,0)}, v_{(1|0,0)}, 1, 1, 1, 1 \right)$ \\[5pt]
\cline{3-3}
					& $\vec{\beta}=\left\langle - 1, - \frac{5}{6} \right\rangle$   &
 \multicolumn{2}{|c|}{$ v_{(1|1,0)} =\frac{3^6\lambda_1^2\lambda_2^2(1-\lambda_1)^2(1-\lambda_2)^2}{2^8 \, d_3^6}, \; v_{(1|0,0)}= \frac{1}{2} +  \frac{(\lambda_1+1)(\lambda_1-2)(2\lambda_1-1)(\lambda_2+1)(\lambda_2-2)(2\lambda_2-1)}{8 \, d_3^3}$}\\[5pt]
\hline
	$\mathcal{J}_{5}$ 	& $\vec{\alpha}=\left\langle -\frac{1}{2}, - \frac{1}{2} \right\rangle$  &  $g_5 =\sqrt{\frac{\lambda_1(\lambda_2-1)}{\lambda_2(\lambda_1-1)}}(1-\lambda_1\lambda_2)(\lambda_1+\lambda_2-\lambda_1\lambda_2)$
& $\mathbf{v}_5=\left(  v_{(1|3,0)}, v_{(1|2,0)}, 1, 0, 0, 1, 1, 1 \right)$ \\[5pt] 
\cline{3-3}
					& $\vec{\beta}=\left\langle 0, - \frac{1}{2} \right\rangle$   & 
 \multicolumn{2}{|c|}{$v_{(1|3,0)} = - \frac{\lambda_1^2\lambda_2^2(\lambda_1-1)^2(\lambda_2-1)^2}{(1-\lambda_1\lambda_2)^3(\lambda_1+\lambda_2-\lambda_1\lambda_2)^3}, \;
 v_{(1|2,0)} =  - \frac{\lambda_1\lambda_2(\lambda_1-1)(\lambda_2-1)(1+\lambda_1+\lambda_2-2\lambda_1\lambda_2)}{(1-\lambda_1\lambda_2)^2(\lambda_1+\lambda_2-\lambda_1\lambda_2)^2}$}\\[5pt]
\hline
\end{tabular}
\caption{Restrictions of the GKZ system from Section~\ref{sec:GKZ}}\label{tab:KummerGKZ}}
\end{table}

\newpage

\bibliographystyle{amsplain}
\bibliography{ref}{} 

\end{document}